\newtheorem{thm}{Theorem}[section]
\newtheorem{lem}[thm]{Lemma}
\newtheorem{cor}[thm]{Corollary}
\newtheorem{prb}[thm]{Problem}
\newtheorem{prop}[thm]{Properties}
\theoremstyle{definition}
\newtheorem{rem}[thm]{Remark}
\DeclareMathOperator{\Der}{Der}
\begin{document}

\title{Differentiation of Genus 3 Hyperelliptic Functions}
\author{Elena~Yu.~Bunkova}
\address{Steklov Mathematical Institute of Russian Academy of Sciences, Moscow, Russia.}
\email{bunkova@mi.ras.ru}
\thanks{Supported in part by Young Russian Mathematics award, Royal Society International Exchange grant and the RFBR project 17-01-00366 A}

\begin{abstract}
In this work we give an explicit solution to the problem of differentiation of hyperelliptic functions in genus $3$ case.
It is a genus $3$ analogue of the result of F.~G.~Frobenius and L.~Stickelberger \cite{FS}.

Our method is based on the series of works by V.~M.~Buchstaber, D.~V.~Leikin and V.~Z.~Enolskii \cite{BL, B, BEL, BPol}.
We describe a polynomial map $p\colon \mathbb{C}^{3g} \to \mathbb{C}^{2g}$.
For $g = 1,2,3$ we describe $3g$ polynomial vector fields in $\mathbb{C}^{3g}$ projectable for $p$ and their polynomial Lie algebras.
We obtain the corresponding derivations of the field of hyperelliptic functions.
\end{abstract}

\maketitle

\section{Introduction}

In \cite{BL} the problem of differentiation of Abelian functions was described.
It has deep relations \cite{B} with KdV equations theory.

An \emph{Abelian function} is a meromorphic function on $\mathbb{C}^g$ with a lattice of periods $\Gamma \subset \mathbb{C}^g$
of rank $2g$. We say that an Abelian function is a meromorphic function on the complex torus $T^g = \mathbb{C}^g/\Gamma$.

Let us consider hyperelliptic curves of genus $g$ in the model
\begin{equation} \label{1}
\mathcal{V}_\lambda = \{(X,Y)\in\mathbb{C}^2 \colon
Y^2 = X^{2g+1} + \lambda_4 X^{2 g - 1}  + \lambda_6 X^{2 g - 2} + \ldots + \lambda_{4 g} X + \lambda_{4 g + 2}\}. 
\end{equation}
Such a curve depends on the parameters $\lambda = (\lambda_4, \lambda_6, \ldots, \lambda_{4 g}, \lambda_{4 g + 2}) \in \mathbb{C}^{2 g}$.

Denote by $\mathcal{B} \subset \mathbb{C}^{2g}$ the subspace of parameters such that $\mathcal{V}_{\lambda}$ is non-singular for $\lambda \in \mathcal{B}$.
We have $\mathcal{B} = \mathbb{C}^{2g} \backslash \Sigma$ where $\Sigma$ is the discriminant curve.

In this work we will deal only with Abelian functions in the model \eqref{1}. In \cite{B} such functions are called hyperelliptic.

A hyperelliptic function of genus $g$ is 
 a smooth function defined on an open dense subset of $\mathbb{C}^g \times \mathcal{B}$,
such that for each $\lambda \in \mathcal{B}$ it's restriction to $\mathbb{C}^g \times \lambda$
 is Abelian with $T^g$ the Jacobian $\mathcal{J}_\lambda$ of $\mathcal{V}_\lambda$.
 
Let $\mathcal{U}$ be the space of the fiber bundle $\pi: \mathcal{U} \to \mathcal{B}$ with fiber over $\lambda \in \mathcal{B}$ the Jacobian~$\mathcal{J}_\lambda$
of the curve $\mathcal{V}_\lambda$.
Thus, a hyperelliptic function is a fiberwise meromorphic function on~$\mathcal{U}$.
According to Dubrovin--Novikov theorem \cite{DN}, the space $\mathcal{U}$ is birationally equivalent to the complex linear space $\mathbb{C}^{3g}$.
Denote the field of hyperelliptic functions by $\mathcal{F}$.

The general statement of the Problem of Differentiation of Abelian Functions is given in~\cite{BL}.
In this paper we consider the special case of the model \eqref{1}:

\begin{prb}[Problem of Differentiation of Hyperelliptic Functions]\text{ } \label{p1}

\begin{enumerate}
 \item Find the $3g$ generators of the $\mathcal{F}$-module $\Der \mathcal{F}$ of derivations of the field $\mathcal{F}$.
 \item Describe the structure of Lie algebra $\Der \mathcal{F}$ (i.e. find the commutation relations).
\end{enumerate}
\end{prb}

In \cite{BL} the Problem of Differentiation of Abelian Functions is solved.
Yet this solution is not explicit in the sense that it gives general methods that lead to increasing
amount of problems and calculations to give explicit answers for each genera. 

\eject

In this work we introduce a different approach. The main idea is taken from \cite{B}.
It consists of replacing the bundle $\pi: \mathcal{U} \to \mathcal{B}$
by a polynomial map $p: \mathbb{C}^{3g} \to \mathbb{C}^{2g}$. 
This relies on a theorem from \cite{BEL}.
We give polynomial vector fields in $\mathbb{C}^{3g}$ projectable for $p$ and their polynomial Lie algebras \cite{BPol}.
By construction this solves Problem \ref{p1} for $g=1,2,3$.

The classical genus 1 case formulas are given in \cite{BL}.
The genus 2 case formulas were first obtained in \cite{B}.
This work gives the full explicit answer for Problem \ref{p1} in genus 3 case.
Our method should also work to solve Problem \ref{p1} for genus $g > 3$.

In this work we use the theory of hyperelliptic Kleinian functions (see \cite{BEL, Baker, BEL-97, BEL-12}, and~\cite{WW} for elliptic functions).
Let $\sigma(u, \lambda)$ be the hyperelliptic sigma function (or elliptic sigma function in genus $g=1$ case).
We use the notation
\begin{equation} \label{n}
\zeta_{k} = \frac{\partial}{\partial_{u_k}} \ln \sigma(u, \lambda), \qquad
\wp_{i; k_1, \ldots, k_n} = - \frac{\partial^{i + n}}{\partial_{u_1}^{i} \partial_{u_{k_1}} \cdots \partial_{u_{k_n}}} \ln \sigma(u, \lambda),
\end{equation}
where $u = (u_1, u_3, \ldots, u_{2 g -1})$, $n \geqslant 0$, $i + n \geqslant 2$, $k_s \in \{ 1, 3, \ldots, 2 g - 1\}$. In the case $n = 0$ we will skip the semicolon.
Note that our notation for the variables $u_k$ differs from the one in \cite{BEL, BEL-12} as $u_{i} \leftrightarrow u_{2g + 1 - 2 i}$.
This follows \cite{B} and gives us a supplementary homogeneity for the grading of variables corresponding to their indices.

The paper is organized as follows: 
In Sections \ref{s2}--\ref{s4} we deal with the bundle $\pi:\mathcal{U} \to \mathcal{B}$.
In Section \ref{s5} we describe the polynomial map $p: \mathbb{C}^{3g} \to \mathbb{C}^{2g}$ and it's relation to $\pi: \mathcal{U} \to \mathcal{B}$.
In Sections \ref{s6}--\ref{s9} we give polynomial vector fields in $\mathbb{C}^{3g}$ projectable for $p$.
In Section \ref{s10} we use these vector fields to get the generators of $\Der \mathcal{F}$ and their Lie algebra.

In Section \ref{s2} we investigate Problem \ref{p1}, in Section \ref{s3} we give it's answers \cite{FS, B} \mbox{for $g = 1,2$.}
In Section \ref{s4} we give $2g$ polynomial vector fields on $\mathcal{B}$ with some additional properties we will mention.

In Section \ref{s6} we relate Problem \ref{p1} to a problem of constructing polynomial vector fields in $\mathbb{C}^{3g}$. 
In Sections \ref{s7}--\ref{s9} we give $3g$ polynomial vector fields in $\mathbb{C}^{3g}$ for $g = 1,2$ and~$3$,
we describe their properties and their polynomial Lie algebras.

In Section \ref{s10} we give the theorem solving Problem \ref{p1} in genus $3$ case.

The author thanks V. M. Buchstaber for fruitful discussions of the results.

\vspace{-1mm}

\section{Problem of Differentiation over Parameters} \label{s2}

Let us consider the Problem of Differentiation of Hyperelliptic Functions \ref{p1}.

In $\mathbb{C}^g \times \mathcal{B} \subset \mathbb{C}^{3g}$ we take the coordinates
$(u_1, u_3, \ldots, u_{2 g -1}, \lambda_4, \lambda_6, \ldots, \lambda_{4 g}, \lambda_{4 g + 2})$.
Their~indices correspond to their grading. Further most functions considered will be homogeneous with respect to this grading.
By definition, hyperelliptic functions are meromorphic functions in $u = (u_1, u_3, \ldots, u_{2 g -1})$ with $2g$ periods that depend on the
\mbox{parameters $\lambda = (\lambda_4, \lambda_6, \ldots, \lambda_{4 g}, \lambda_{4 g + 2})$.}

If $f(u, \lambda)$ is a hyperelliptic function, then
${\partial \over \partial u_1} f(u, \lambda)$, ${\partial \over \partial u_3} f(u, \lambda)$, $\ldots$, ${\partial \over \partial u_{2 g -1}} f(u, \lambda)$,
\mbox{are hyperelliptic} functions with the same periods. Therefore, we have ${\partial \over \partial u_k} \in \Der \mathcal{F}$ \mbox{for $k = 1$, $3$, $\ldots$, $2 g -1$.}
We denote ${\partial \over \partial u_k} = \mathscr{L}_k$ for odd $k$ from $1$ to $2 g - 1$.

On the other hand, if we take a hyperelliptic function as a smooth function defined on an open dense subset of $\mathbb{C}^g \times \mathcal{B}$,
it's derivative ${\partial \over \partial \lambda_k} f(u, \lambda)$ in general would not be hyperelliptic, thus
${\partial \over \partial \lambda_k} \notin \Der \mathcal{F}$.
Therefore, Problem \ref{p1} has the following subproblem: 

\begin{prb} \label{p2}
For a vector field $L$ in $\mathcal{B}$ find a vector field $\mathscr{L}$ in $\mathbb{C}^g \times \mathcal{B}$ \mbox{such that $\mathscr{L}\in  \Der \mathcal{F}$,}
the vector field $\mathscr{L}$ is projectable for $\pi$ and $L$ is the pushforward of $\mathscr{L}$
\mbox{(i.e.~$\mathscr{L}(\pi^*(f)) = \pi^* L(f)$} for any function $f$ in $\mathcal{B}$).
\end{prb}

A solution to Problem \ref{p2} for $2g$ vector fields $L_{2k}$ independent at any point of $\mathcal{B}$
together with the vector fields $\mathscr{L}_{k}$ for $k = 1, 3, \ldots, 2 g -1$
would give a solution to Problem~\ref{p1} (1).

\section{Known solutions of the Problem of Differentiation of~Hyperelliptic~Functions} \label{s3}

\subsection{Genus 1} In the elliptic case the generators of the $\mathcal{F}$-module $\Der \mathcal{F}$ have been found in~\cite{FS}.
We give the generators and their Lie algebra:
\begin{align*}
\mathscr{L}_0 &= 4 \lambda_4 \partial_{\lambda_4} + 6 \lambda_6 \partial_{\lambda_6} - u_1 \partial_{u_1}, \rule[-10pt]{0pt}{16pt} & [\mathscr{L}_0, \mathscr{L}_1] &= \mathscr{L}_1,\\
\mathscr{L}_1 &= \partial_{u_1}, & [\mathscr{L}_0, \mathscr{L}_2] &= 2 \mathscr{L}_2, \\
\mathscr{L}_2 &= 6 \lambda_6 \partial_{\lambda_4} - {4 \over 3} \lambda_4^2 \partial_{\lambda_6} - \zeta_1 \partial_{u_1}, & 
[\mathscr{L}_1, \mathscr{L}_2] &= \wp_2 \mathscr{L}_1.
\end{align*}

\subsection{Genus 2} \label{ss2} In this case Problem \ref{p1} was solved in~\cite{B}.
The generators are (see~\cite{B}, Theorem 29):
\begin{align*}
\mathscr{L}_0 &= L_0 - u_1 \partial_{u_1} - 3 u_3 \partial_{u_3}, &
\mathscr{L}_2 &= L_2 + \left(- \zeta_1 + {4 \over 5} \lambda_4 u_3\right) \partial_{u_1} - u_1 \partial_{u_3}, \\
\mathscr{L}_1 &= \partial_{u_1}, &
\mathscr{L}_4 &= L_4 + \left(- \zeta_3 + {6 \over 5} \lambda_6 u_3\right) \partial_{u_1} - \left(\zeta_1 + \lambda_4 u_3\right) \partial_{u_3}, \\
\mathscr{L}_3 &= \partial_{u_3}, &
\mathscr{L}_6 &= L_6 + {3 \over 5} \lambda_8 u_3 \partial_{u_1} - \zeta_3 \partial_{u_3},
\end{align*}
where the vector fields $L_k$ on $\mathcal{B}$ will be given explicitly in Section \ref{s4}. 

The Lie algebra is (see \cite{B}, Theorem 32):
\begin{align*}
[\mathscr{L}_0, \mathscr{L}_k] &= k \mathscr{L}_k, \quad k=1,2,3,4,6, \qquad \qquad \qquad [\mathscr{L}_1, \mathscr{L}_3] = 0, \\
[\mathscr{L}_1, \mathscr{L}_2] &= \wp_2 \mathscr{L}_1 - \mathscr{L}_3, \qquad
[\mathscr{L}_1, \mathscr{L}_4] = \wp_{1;3} \mathscr{L}_1 + \wp_2 \mathscr{L}_3, \qquad
[\mathscr{L}_1, \mathscr{L}_6] = \wp_{1;3} \mathscr{L}_3\\
[\mathscr{L}_3, \mathscr{L}_2] &= \left(\wp_{1;3} + {4 \over 5} \lambda_4 \right) \mathscr{L}_1, \qquad \quad
[\mathscr{L}_3, \mathscr{L}_6] = {3 \over 5} \lambda_8 \mathscr{L}_1 + \wp_{0;3,3} \mathscr{L}_3, \\
[\mathscr{L}_3, \mathscr{L}_4] &= \left(\wp_{0;3,3} + {6 \over 5} \lambda_6 \right) \mathscr{L}_1 + \left(\wp_{1;3} - \lambda_4\right) \mathscr{L}_3, \\
[\mathscr{L}_2, \mathscr{L}_4] &=\frac{8}{5}\lambda_6 \mathscr{L}_0 -\frac{8}{5}\lambda_4 \mathscr{L}_2 + 2 \mathscr{L}_6
-\frac{1}{2}\wp_{2;3} \mathscr{L}_1 +\frac{1}{2}\wp_3 \mathscr{L}_3, & \\
[\mathscr{L}_2, \mathscr{L}_6] &= \frac{4}{5}\lambda_8 \mathscr{L}_0 -\frac{4}{5}\lambda_4 \mathscr{L}_4
-\frac{1}{2}\wp_{1;3,3} \mathscr{L}_1 +\frac{1}{2}\wp_{2;3} \mathscr{L}_3, & \\
[\mathscr{L}_4, \mathscr{L}_6] &=-2\lambda_{10} \mathscr{L}_0 +\frac{6}{5}\lambda_8 \mathscr{L}_2 -\frac{6}{5}\lambda_6 \mathscr{L}_4 + 2\lambda_4 \mathscr{L}_6
- \frac{1}{2}\wp_{0;3,3,3} \mathscr{L}_1 +\frac{1}{2}\wp_{1;3,3} \mathscr{L}_3. 
\end{align*}

\section{Polynomial vector fields in $\mathcal{B}$} \label{s4}

We consider $\mathbb{C}^{2g}$ with coordinates $(\lambda_4, \lambda_6, \ldots, \lambda_{4 g}, \lambda_{4 g + 2})$ and set $\lambda_s = 0$
for every $s \notin \{ 4,6, \ldots, 4 g, 4 g + 2\}$.
For $k, m \in \{ 1, 2, \ldots, 2 g\}$, $k\leqslant m$ set
\[
 T_{2k, 2m} = 2 (k+m) \lambda_{2k+2m} + \sum_{s=2}^{k-1} 2 (k + m - 2 s) \lambda_{2s} \lambda_{2k+2m-2s}
 - {2 k (2 g - m + 1) \over 2 g + 1} \lambda_{2k} \lambda_{2m},
\] 
and for $k > m $ set $T_{2k, 2m} = T_{2m, 2k}$.
For $k = 0, 1, 2, \ldots, {2 g - 1}$ we
have the vector fields 
\begin{equation} \label{Lk}
 L_{2k} = \sum_{s = 2}^{2 g + 1} T_{2k + 2, 2 s - 2} {\partial \over \partial \lambda_{2s}}.
\end{equation}

\eject

\begin{prop} \label{pB} \text{}
\begin{enumerate}
 \item The vector field $L_0$ is the Euler vector field on $\mathcal{B}$.\\
 We have $L_0(\lambda_k) = k \lambda_k$, $[L_0, L_{2k}] = 2k L_{2k}$.
 \item We have $L_{2k} (\lambda_{2s+4}) = L_{2s} (\lambda_{2k+4})$.
 \item The vector fields $L_{2k}$ are independent at any point of $\mathcal{B}$.
 \item The vector fields $L_{2k}$ are tangent to the discriminant curve $\Sigma$ of $\mathcal{V}_\lambda$.
\end{enumerate} 
\end{prop}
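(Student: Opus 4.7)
The plan is to handle the four parts in order, exploiting the explicit formula for $T_{2k,2m}$, the homogeneity built into the grading $\deg\lambda_j=j$, and an identification of the coefficient determinant of the $L_{2k}$'s with the discriminant of $\mathcal{V}_\lambda$.

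For part (1), the strategy is to specialize the formula for $T$ to $k=1$: the inner sum $\sum_{s=2}^{k-1}$ is then empty, and the final term vanishes because the convention $\lambda_2=0$ kills the factor $\lambda_{2k}$, leaving $T_{2,2m}=2(1+m)\lambda_{2m+2}$. Substituting into the definition \eqref{Lk} yields $L_0=\sum_{s=2}^{2g+1}2s\,\lambda_{2s}\,\partial_{\lambda_{2s}}$, which is the Euler vector field with $L_0(\lambda_k)=k\lambda_k$. A term-by-term check then confirms that every monomial in $T_{2k+2,2s-2}$ carries grading $2(k+s)$, whence $L_{2k}$ is homogeneous of weight $2k$, equivalently $[L_0,L_{2k}]=2kL_{2k}$. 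Part (2) follows at once from the symmetry $T_{2k+2,2s+2}=T_{2s+2,2k+2}$ that is built into the definition of $T$ (given by the displayed formula when $k\leqslant m$ and extended by symmetry otherwise): $L_{2k}(\lambda_{2s+4})=T_{2k+2,2s+2}=T_{2s+2,2k+2}=L_{2s}(\lambda_{2k+4})$.

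The substantive content lies in parts (3) and (4), which I would prove together through the polynomial $\Delta(\lambda):=\det(T_{2k+2,2s-2})$, with $k=0,\dots,2g-1$ indexing the columns and $s=2,\dots,2g+1$ indexing the rows. The central claim is that $\Delta$ coincides, up to a nonzero scalar, with the discriminant $D(\lambda)$ of the polynomial $P(X):=X^{2g+1}+\lambda_4X^{2g-1}+\dots+\lambda_{4g+2}$ defining the curve $\mathcal{V}_\lambda$. Since $\mathcal{B}=\{\lambda:D(\lambda)\neq 0\}$, this identification immediately gives (3). To reach it, the plan is to prove (4) first, namely that $L_{2k}(D)=c_{2k}(\lambda)\,D$ for some polynomial $c_{2k}$. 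For $L_0$ this is Euler's identity, with $c_0=\deg D=4g(2g+1)$. For $k\geqslant 1$ I would write $D=\mathrm{Res}(P,P')$ and apply the standard derivation formula for the resultant; the specific coefficients in the definition of $T_{2k,2m}$, and in particular the nonobvious factor $2k(2g-m+1)/(2g+1)$, are engineered precisely so that the apparent non-polynomial pieces cancel. Once (4) is in hand, all $2g$ vector fields $L_{2k}$ are tangent to $\Sigma$, which is irreducible of codimension one; they therefore become linearly dependent on the smooth locus of $\Sigma$, so $D$ divides $\Delta$. A direct weight computation shows that both polynomials are homogeneous of the same degree $4g(2g+1)$, forcing $\Delta=c\,D$ with $c\in\mathbb{C}$. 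Evaluating at the point $\lambda_4=\dots=\lambda_{4g}=0$, $\lambda_{4g+2}\neq 0$ (which lies in $\mathcal{B}$) reduces the matrix $(T_{2k+2,2s-2})$ to an anti-diagonal form whose determinant is manifestly nonzero, confirming $c\neq 0$ and completing (3).

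The main obstacle is the resultant-based verification of (4) in full generality. A direct expansion succeeds for small $g$---it can be done by hand for $g=1$ and by symbolic computation for $g=2,3$---but a uniform argument requires careful book-keeping in the resultant derivation formula, of which the formula for $T_{2k,2m}$ is the precise shadow. The cleanest alternative route is to match these explicit $L_{2k}$ with the abstract tangent-to-discriminant vector fields constructed in \cite{BL, BEL} via sigma-function arguments; once such a match is made, (4) is inherited from that framework, and parts (1)--(3) fall out as sketched above.
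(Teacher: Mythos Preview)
Your treatment of (1) and (2) matches the paper's: both are read off the definition and the symmetry of $T$. The difference is entirely in (3) and (4).

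The paper does not prove (3) and (4) for general $g$. It states them as properties, notes that general constructions appear in \cite{BM,A}, and then in Lemma~\ref{l1} verifies them only for $g=1,2,3$ by brute force: write out the matrix $T$ explicitly, compute $\det T$ and identify it as a nonzero constant times $R(\lambda)$ (this gives (3)), and then apply each $L_{2k}$ to $\det T$ and observe the result is a polynomial multiple of $\det T$ (this gives (4)). No structural argument is offered.

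Your route is genuinely different and more conceptual. You reverse the logical order: establish (4) first, then deduce (3) via the chain ``tangent to $\Sigma$ $\Rightarrow$ dependent along the smooth locus of $\Sigma$ $\Rightarrow$ $D\mid\Delta$ $\Rightarrow$ $\Delta=cD$ by degree count $\Rightarrow$ $c\neq 0$ by evaluation at $\lambda_4=\cdots=\lambda_{4g}=0$, $\lambda_{4g+2}\neq 0$.'' This deduction of (3) from (4) is correct and elegant; the degree computation and the anti-diagonal specialization both check out. What you gain is a uniform argument in $g$ and an explanation of \emph{why} $\det T$ and $R(\lambda)$ are proportional, rather than a coincidence verified case by case.

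The cost is that your proof of (4) itself is only a sketch. You correctly identify the resultant derivation formula as the mechanism and note that the coefficient $\tfrac{2k(2g-m+1)}{2g+1}$ is tailored to make the cancellation work, but you do not carry this out, and you acknowledge the obstacle. So as written, your argument for (4) is not complete for general $g$, and for $g=1,2,3$ it reduces to the same direct verification the paper performs. If you want a self-contained proof at the level of the paper, the paper's explicit computation for $g\leqslant 3$ is what is actually needed; your structural argument would then supply (3) as a corollary more cleanly than the paper's separate determinant calculation.
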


Let us give some remarks. Note that properties (1) and (2) follow directly from definition.
The property (3) is the one we will use essentially further in the work. For the property (4) let us introduce the following notation.
Consider the curve $\mathcal{V}_\lambda$. Set 
\[
f(X) = X^{2g+1} + \lambda_4 X^{2 g - 1}  + \lambda_6 X^{2 g - 2} + \ldots + \lambda_{4 g} X + \lambda_{4 g + 2}. 
\]
Denote by $R(\lambda)$ the resultant of $f(X)$ and $f'(X)$. The discriminant curve $\Sigma$ is defined by~$(\lambda \in \Sigma) \Leftrightarrow (R(\lambda) = 0)$.
Thus, property (4) is $L_{2k}(R(\lambda)) = 0$ for $\lambda$ such that $R(\lambda) = 0$.

We will prove the Properties \ref{pB} for the vector fields \eqref{Lk} for $g = 1,2,3$.
Let us note that for general $g$ various constructions have been given to solve the problem of finding vector fields of the form \eqref{Lk} with Properties~\ref{pB} 
(see \cite{BM}, \cite{A} section 4).
We will need the explicit form \eqref{Lk} for these vector fields.

\begin{lem} \label{l1}
The vector fields $L_{2k}$, $k = 0, 1, \ldots, 2 g - 1$, have properties \ref{pB} for $g = 1,2,3$.
\end{lem}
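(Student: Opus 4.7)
The plan is to verify each of the four properties in Proposition \ref{pB} by direct computation for $g = 1, 2, 3$.

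Properties (1) and (2) follow formally from the definition of $T_{2k, 2m}$. For (1), I substitute $k = 1$: the sum in the formula is empty, and the last term vanishes because $\lambda_2 = 0$, leaving $T_{2, 2m} = 2(1 + m)\lambda_{2(1 + m)}$. Thus $L_0(\lambda_{2s}) = T_{2, 2s - 2} = 2s\,\lambda_{2s}$, which is the Euler property. The commutator $[L_0, L_{2k}] = 2k L_{2k}$ then follows because each coefficient $T_{2k + 2, 2s - 2}$ is weighted-homogeneous of weight $2k + 2s$, so the vector field $L_{2k}$ has total weight $2k$. Property (2) is immediate from the convention $T_{2k, 2m} = T_{2m, 2k}$ built into the definition.

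For property (4) I would verify directly, for each $k \in \{0, 1, \ldots, 2g - 1\}$ and each $g \in \{1, 2, 3\}$, that $L_{2k}(R(\lambda))$ lies in the ideal $(R(\lambda)) \subset \mathbb{C}[\lambda]$; since $R$ is a squarefree generator of the ideal of $\Sigma$, this is equivalent to tangency of $L_{2k}$ to $\Sigma$. Weighted homogeneity sharply controls the computation: $L_{2k}(R)$ is homogeneous of weight $2k + 4g(2g + 1)$, so the candidate quotient $L_{2k}(R)/R$ is homogeneous of weight $2k$ and is supported on a small finite set of monomials. The divisibility then amounts to a finite polynomial identity that can be checked directly, by hand for $g = 1, 2$ and with computer algebra support for $g = 3$.

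Property (3) follows from (4) together with a degree count. The $2g \times 2g$ matrix $M(\lambda) = \bigl(T_{2k + 2, 2s - 2}\bigr)_{k, s}$ has entries of weight $2k + 2s$, so $\det M(\lambda)$ is weighted-homogeneous of weight
\[
\sum_{k = 0}^{2g - 1} 2k \;+\; \sum_{s = 2}^{2g + 1} 2s \;=\; 4g(2g + 1),
\]
which equals the weighted degree of $R(\lambda)$. Since the $L_{2k}$ are $2g$ vector fields tangent to the hypersurface $\Sigma$ of dimension $2g - 1$, they are linearly dependent at every smooth point of $\Sigma$, so $\det M$ vanishes on $\Sigma$; hence $R$ divides $\det M$. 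Comparing weighted degrees yields $\det M = c R$ for some $c \in \mathbb{C}$, and it remains to check $c \ne 0$ by evaluating both sides at a single point of $\mathcal{B}$, which is done separately in each genus.

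The main obstacle is the scale of the $g = 3$ calculation: the discriminant $R(\lambda)$ of the degree-$7$ polynomial $f(X)$ is a large polynomial in six variables, and six separate divisibility verifications $R \mid L_{2k}(R)$ are required. The saving grace throughout is the grading, which rigidly constrains every intermediate expression and provides strong self-consistency checks on the outcome.
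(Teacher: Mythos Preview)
Your proposal is correct, and the core computation --- applying the $L_{2k}$ to the discriminant and checking divisibility --- is the same as in the paper. The difference lies in the logical order of (3) and (4). The paper first computes $\det T$ explicitly, compares it with $R(\lambda)$ to obtain $\det T = c\,R$ for an explicit nonzero constant $c$ (this gives (3) directly), and then verifies (4) by showing that each $L_{2k}(\det T)$ is a polynomial multiple of $\det T$. You instead establish (4) first by checking $R \mid L_{2k}(R)$, and then deduce (3) via the geometric observation that $2g$ vector fields tangent to the $(2g{-}1)$-dimensional hypersurface $\Sigma$ must be linearly dependent along its smooth locus, forcing $R \mid \det M$; equal weighted degree then gives $\det M = cR$, with $c \ne 0$ checked at a single point.

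Your route is slightly more conceptual and avoids computing the full $6\times 6$ determinant for $g=3$ (only one evaluation is needed), but it relies on $R$ being squarefree so that vanishing on $\Sigma$ implies divisibility by $R$. This is true --- the discriminant of the depressed family is irreducible --- but you should justify it or cite it as a standard fact. The paper's direct comparison of $\det T$ and $R$ is logically self-contained at the cost of a heavier determinant computation. Either way, the $g=3$ case needs computer algebra.
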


\begin{proof}
Denote by $T$ the symmetric $2 g \times 2 g$ matrix with elements $T_{2k, 2m} = L_{2k -2}(\lambda_{2m+2})$.

For $g = 1$ we have
\begin{equation} \label{T1}
T = \begin{pmatrix}
    4 \lambda_4 & 6 \lambda_6 \\
    6 \lambda_6 & - {4 \over 3} \lambda_4^2
    \end{pmatrix},
    \qquad
\det T = - {4 \over 3} \left(4 \lambda_4^3 + 27 \lambda_6^2\right), \qquad R(\lambda) = 4 \lambda_4^3 + 27 \lambda_6^2.
\end{equation}
We have $(R(\lambda) = 0) \Leftrightarrow (\det T = 0)$, therefore the vector fields $L_0$ and $L_2$ are independent at any point of
$\mathcal{B} = \mathbb{C}^2 \backslash \Sigma$.
We have 
$
L_0 \det T = 12 \det T$, $L_2 \det T = 0,
$
so the vector fields $L_0$ and $L_2$ are tangent to $\Sigma$.

For $g = 2$ we have
\begin{equation} \label{T2}
 T =  \begin{pmatrix}
 4 \lambda_4 & 6 \lambda_6 & 8 \lambda_8 & 10 \lambda_{10}\\
 6 \lambda_6 & 8 \lambda_8 & 10 \lambda_{10} & 0 \\
 8 \lambda_8 & 10 \lambda_{10} & 4 \lambda_4 \lambda_8 & 6 \lambda_4 \lambda_{10} \\
 10 \lambda_{10} & 0 & 6 \lambda_4 \lambda_{10} & 4 \lambda_6 \lambda_{10}
 \end{pmatrix} 
 - {1 \over 5} 
 \begin{pmatrix}
 0 & 0 & 0 & 0 \\
 0 & 12 \lambda_4^2 & 8 \lambda_4 \lambda_6 & 4 \lambda_4 \lambda_8 \\
 0 & 8 \lambda_4 \lambda_6 & 12 \lambda_6^2 & 6 \lambda_6 \lambda_8 \\
 0 & 4 \lambda_4 \lambda_8 & 6 \lambda_6 \lambda_8 & 8 \lambda_8^2 \\
 \end{pmatrix}.
\end{equation}
This coincides with the matrix given in \cite{BL}, example 15, so we give this reference for the proof.
We have as well $(R(\lambda) = 0) \Leftrightarrow (\det T = 0)$.

For $g = 3$ we have
\begin{multline} \label{T3}
 T = \begin{pmatrix}
     4 \lambda_4 & 6 \lambda_6 & 8 \lambda_8 & 10 \lambda_{10} & 12 \lambda_{12} & 14 \lambda_{14} \\
     6 \lambda_6 & 8 \lambda_8 & 10 \lambda_{10} & 12 \lambda_{12} & 14 \lambda_{14} & 0 \\
     8 \lambda_8 & 10 \lambda_{10} & 12 \lambda_{12} + 4 \lambda_4 \lambda_8 & 14 \lambda_{14} + 6 \lambda_4 \lambda_{10} & 8 \lambda_4 \lambda_{12} & 10 \lambda_4 \lambda_{14} \\
     10 \lambda_{10} & 12 \lambda_{12} & 14 \lambda_{14} + 6 \lambda_4 \lambda_{10} & 4 \lambda_6 \lambda_{10} + 8 \lambda_4 \lambda_{12}
     & 6 \lambda_6 \lambda_{12} + 10 \lambda_4 \lambda_{14} & 8 \lambda_6 \lambda_{14} \\
     12 \lambda_{12} & 14 \lambda_{14} & 8 \lambda_4 \lambda_{12} & 6 \lambda_6 \lambda_{12} + 10 \lambda_4 \lambda_{14} & 4 \lambda_8 \lambda_{12} + 8 \lambda_6 \lambda_{14}
     & 6 \lambda_8 \lambda_{14} \\
     14 \lambda_{14} & 0 & 10 \lambda_4 \lambda_{14} & 8 \lambda_6 \lambda_{14} & 6 \lambda_8 \lambda_{14} & 4 \lambda_{10} \lambda_{14} \\
     \end{pmatrix} - \\
     - {1 \over 7}
     \begin{pmatrix}
     0 & 0 & 0 & 0 & 0 & 0 \\
     0 & 20 \lambda_4^2 & 16 \lambda_4 \lambda_6 & 12 \lambda_4 \lambda_8 &  8 \lambda_4 \lambda_{10} & 4 \lambda_4 \lambda_{12} \\
     0 & 16 \lambda_4 \lambda_6 & 24 \lambda_6^2 & 18 \lambda_6 \lambda_8 & 12 \lambda_6 \lambda_{10} & 6 \lambda_6 \lambda_{12} \\
     0 & 12 \lambda_4 \lambda_8 & 18 \lambda_6 \lambda_8 & 24 \lambda_8^2 & 16 \lambda_8 \lambda_{10} & 8 \lambda_8 \lambda_{12} \\
     0 &  8 \lambda_4 \lambda_{10} & 12 \lambda_6 \lambda_{10} & 16 \lambda_8 \lambda_{10} & 20 \lambda_{10}^2 & 10 \lambda_{10} \lambda_{12} \\
     0 &  4 \lambda_4 \lambda_{12} &  6 \lambda_6 \lambda_{12} &  8 \lambda_8 \lambda_{12} & 10 \lambda_{10} \lambda_{12} & 12 \lambda_{12}^2 \\
     \end{pmatrix}.  
\end{multline}

A straightforward calculation gives $\det T = - {64 \over 7} R(\lambda)$ and $(R(\lambda) = 0) \Leftrightarrow (\det T = 0)$,
therefore the vector fields $L_{2k}$ are independent at any point of $\mathcal{B} = \mathbb{C}^6 \backslash \Sigma$.
We have
\[
 (L_0, L_2, L_4, L_6, L_8, L_{10}) \det T = (84, 0, 40 \lambda_4, 24 \lambda_6, 12 \lambda_8, 4 \lambda_{10}) \det T,
\]
so the vector fields $L_{2k}$ are tangent to $\Sigma$.
\end{proof}

\begin{lem} For $g = 3$ we have
\begin{equation} \label{M}
 \begin{pmatrix}
 [L_2, L_4] \\
 [L_2, L_6] \\
 [L_2, L_8] \\
 [L_2, L_{10}] \\
 [L_4, L_6] \\
 [L_4, L_8] \\
 [L_4, L_{10}] \\
 [L_6, L_8] \\
 [L_6, L_{10}] \\
 [L_8, L_{10}] 
 \end{pmatrix}
=
\mathcal{M}
 \begin{pmatrix}
 L_0 \\
 L_2 \\
 L_4 \\
 L_6 \\
 L_8 \\
 L_{10}
 \end{pmatrix}
 \text{for } \mathcal{M} =  {2 \over 7} \begin{pmatrix}
8 \lambda_6 & - 8 \lambda_4 & 0 & 7 & 0 & 0 \\
6 \lambda_8 & 0 & - 6 \lambda_4 & 0 & 14 & 0 \\
4 \lambda_{10} & 0 & 0 & - 4 \lambda_4 & 0 & 21 \\
2 \lambda_{12} & 0 & 0 & 0 & - 2 \lambda_4 & 0 \\
- 7 \lambda_{10} & 9 \lambda_8 & - 9 \lambda_6 & 7 \lambda_4 & 0 & 7 \\
- 14 \lambda_{12} & 6 \lambda_{10} & 0 & - 6 \lambda_6 & 14 \lambda_4 & 0 \\
- 21 \lambda_{14} & 3 \lambda_{12} & 0 & 0 & - 3 \lambda_6 & 21 \lambda_4 \\
- 7 \lambda_{14} & - 7 \lambda_{12} & 8 \lambda_{10} & - 8 \lambda_8 & 7 \lambda_6 & 7 \lambda_4 \\
0 & - 14 \lambda_{14} & 4 \lambda_{12} & 0 & - 4 \lambda_8 & 14 \lambda_6 \\
0 & 0 &- 7 \lambda_{14} & 5 \lambda_{12} & - 5 \lambda_{10} & 7 \lambda_8 \\
 \end{pmatrix}.
\end{equation}
\end{lem}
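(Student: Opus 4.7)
The lemma is a statement about polynomial vector fields on $\mathbb{C}^6$, so the plan is direct verification organized around the Euler grading. First I would read off the coefficients of each $L_{2k}$, $k=0,\ldots,5$, from \eqref{Lk} and the matrix $T$ in \eqref{T3}: namely, $L_{2k}$ has coefficient $T_{2k+2,\,2s-2}$ in front of $\partial_{\lambda_{2s}}$. For each of the ten pairs $1 \leq k < m \leq 5$ I would then compute the commutator from
\[
[L_{2k}, L_{2m}] = \sum_{t=2}^{7}\sum_{s=2}^{7} \bigl(T_{2k+2, 2s-2}\, \partial_{\lambda_{2s}} T_{2m+2, 2t-2} - T_{2m+2, 2s-2}\, \partial_{\lambda_{2s}} T_{2k+2, 2t-2}\bigr)\, \partial_{\lambda_{2t}},
\]
obtaining an explicit polynomial vector field which, since $[L_0, L_{2k}] = 2k L_{2k}$ by Proposition \ref{pB}(1), is homogeneous of Euler weight $2k+2m$.

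By Proposition \ref{pB}(3) the matrix $T$ is nondegenerate on $\mathcal{B}$, so a priori the decomposition $[L_{2k}, L_{2m}] = \sum_{j=0}^{5} c_j\, L_{2j}$ exists uniquely with the $c_j$ rational functions whose denominators divide $\det T = -\tfrac{64}{7} R(\lambda)$. The content of the lemma is the claim that these $c_j$ are in fact the polynomials tabulated in $\mathcal{M}$. The efficient verification is to multiply the proposed right-hand side of \eqref{M} through by $T$ and compare the resulting $10\times 6$ array of polynomial expressions, entry by entry, with the partial-derivative coefficients of the computed commutators. Homogeneity is a powerful filter: the $(i,j)$ entry of $\mathcal{M}$ must be a homogeneous polynomial in $\lambda_4,\ldots,\lambda_{14}$ of weight $w_i - 2j$, where $w_i \in \{6,8,10,12,10,12,14,14,16,18\}$ is the weight of the $i$-th commutator, and for each of these weights only a short list of monomials is admissible — precisely the shape of $\mathcal{M}$. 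This reduces the identification to pinning down a small number of rational constants.

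The main obstacle is not conceptual but the sheer bulk of the symbolic computation: ten commutators, each a sum of roughly thirty products of entries of $T$ with their first-order partial derivatives, which in practice one carries out using a computer algebra system. A useful independent consistency check is the Jacobi identity $[L_{2k},[L_{2m},L_{2n}]] + [L_{2m},[L_{2n},L_{2k}]] + [L_{2n},[L_{2k},L_{2m}]] = 0$: combined with \eqref{M} and the action of the $L_{2j}$ on the entries of $\mathcal{M}$, it produces polynomial identities in $\lambda$ that must hold identically and provide strong confirmation of the tabulated matrix.
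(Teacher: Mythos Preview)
Your proposal is correct and follows the same approach as the paper, which simply states that the proof is a straightforward calculation in the polynomial vector fields \eqref{Lk}. You have added helpful organizational details (the Euler-grading filter, the entry-by-entry comparison against $T$, and the Jacobi-identity check), but the underlying method is identical to the paper's.
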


\begin{proof}
The proof is a straightforward calculation in polynomial vector fields \eqref{Lk}.
\end{proof}

\section{Polynomial map} \label{s5}

Consider the diagram
\begin{equation} 
 \xymatrix{
	\mathcal{U} \ar[d]^{\pi} \ar@{-->}[r]^{\varphi} & \mathbb{C}^{3 g} \ar[d]^{p}\\
	\mathcal{B} \ar@{^{(}->}[r] & \mathbb{C}^{2g}\\
	}   \label{d}
\end{equation}
Here $\pi: \mathcal{U} \to \mathcal{B}$ is the bundle described above, $\mathcal{B} \subset \mathbb{C}^{2g}$ is the embedding given by the coordinates $\lambda$.
The map $\varphi$ will be given by a set of generators of $\mathcal{F}$ and $p$ will be a polynomial map. 
We will now describe $\varphi$ and $p$. 

We use a fundamental result from the theory of hyperelliptic Abelian functions (see~\cite{BEL-12}, Chapter 5):
Any hyperelliptic function can be represented as a rational function
in $\wp_{1;k}$ and $\wp_{2;k}$, where $k \in \{1, 3, \ldots, 2 g - 1\}$.
The following theorem from \cite{BEL} gives a set of relations between the derivatives of these functions.
We use it to determine a set of generators in~$\mathcal{F}$.

\begin{thm}[\cite{BEL}] For $i, k \in \{1, 3, \ldots, 2 g - 1\}$ we have the relations
\begin{align}
\wp_{3; i} = & 6 \wp_2 \wp_{1; i} + 6 \wp_{1; i+2} - 2 \wp_{0;3, i} + 2 \lambda_{4} \delta_{i,1}, \label{2}\\
\wp_{2; i} \wp_{2; k} = & 4 \left(\wp_2 \wp_{1; i} \wp_{1; k} + \wp_{1; k} \wp_{1; i+2} + \wp_{1; i} \wp_{1; k+2} + \wp_{0; k+2, i+2}\right)  
   - \nonumber \\
   & - 2 (\wp_{1; i} \wp_{0;3,k} + \wp_{1; k} \wp_{0;3,i} + \wp_{0; k, i+4} + \wp_{0;i, k+4}) + \label{3} \\
   &+ 2 \lambda_4 (\delta_{i,1} \wp_{1; k} + \delta_{k,1} \wp_{1;i}) + 2 \lambda_{i+k+4} (2 \delta_{i,k} + \delta_{k, i-2} + \delta_{i, k-2}). \nonumber
\end{align}
\end{thm}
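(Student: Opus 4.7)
The plan is to establish both relations by treating each side as an Abelian function on the Jacobian $\mathcal{J}_\lambda$, comparing pole structure along the theta divisor $(\sigma)$, and then determining any residual constants from the Laurent expansion of $\sigma(u,\lambda)$ around the origin. The latter is controlled by the Schur--Weierstrass polynomial attached to the partition $(g,g-1,\ldots,1)$, together with $\lambda$-dependent corrections coming from the curve equation \eqref{1}.

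For \eqref{2}, I would first note that $\wp_{3;i}$ is an Abelian function with pole order at most $4$ on $(\sigma)$. Each term on the right-hand side is organized to reproduce this singularity: the product $\wp_{2}\wp_{1;i}$ contributes the dominant quartic pole, while $\wp_{1;i+2}$ and $\wp_{0;3,i}$ contribute quadratic poles and lower-order corrections. Subtracting the right-hand side from the left then yields an Abelian function holomorphic on $\mathcal{J}_\lambda$, hence a constant (in $u$) depending only on $\lambda$. Expanding $\sigma$ at $u=0$ to the required order and reading off the $u_1\to 0$ asymptotics pins down this constant as $2\lambda_4\delta_{i,1}$; the Kronecker factor arises because only the $u_1$-direction of the Schur expansion meets the $\lambda_4 X^{2g-1}$ coefficient of \eqref{1} at leading order.

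For \eqref{3}, the same strategy applies but with sixth-order poles on $(\sigma)$ and substantially more bookkeeping. The right-hand side splits naturally into four blocks: the ``quadratic'' $\wp_{1;\cdot}$-terms matching the leading pole; the mixed products $\wp_{1;i}\wp_{0;3,k}$ and $\wp_{1;k}\wp_{0;3,i}$ cancelling the sub-leading singularities; the pure higher-derivative terms $\wp_{0;k+2,i+2}$, $\wp_{0;k,i+4}$, $\wp_{0;i,k+4}$; and the $\lambda$-linear corrections. After verifying that the difference is holomorphic on $\mathcal{J}_\lambda$, the constant is fixed by matching the diagonal part of the Laurent expansion; this is where the factor $2\lambda_{i+k+4}(2\delta_{i,k}+\delta_{k,i-2}+\delta_{i,k-2})$ emerges, the Kronecker deltas capturing precisely the cases in which a power of $u_1$ from the Schur expansion meets a compensating $\lambda$-correction.

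The main obstacle is the precise coefficient matching, especially for \eqref{3}. The \emph{existence} of some quadratic identity of this shape follows easily from dimension counting for spaces of Abelian functions of bounded pole order on $\mathcal{J}_\lambda$, but extracting the exact integer coefficients and the $\lambda$-linear corrections requires the sigma expansion carried to sufficient order, including all subleading corrections to the Schur--Weierstrass polynomial induced by \eqref{1}. An alternative route --- differentiating Baker's fundamental bilinear relation on the curve and projecting to the Jacobian --- may organize the combinatorics more cleanly; but the boundary effects at the endpoints $i,k\in\{1,2g-1\}$ of the index set, responsible for the Kronecker-delta terms, remain the delicate part in either approach.
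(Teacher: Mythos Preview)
The paper does not give an independent proof of this theorem at all: its ``proof'' consists solely of citing formulas (4.1) and (4.8) from \cite{BEL} and noting that, after the index relabelling $u_i \leftrightarrow u_{2g+1-2i}$ described in the introduction, those formulas become \eqref{2} and \eqref{3}. In other words, the theorem is quoted, not proved.

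Your proposal, by contrast, sketches an \emph{ab initio} argument along classical lines --- comparing both sides as Abelian functions on $\mathcal{J}_\lambda$, matching pole orders along $(\sigma)$, and fixing the residual constants from the sigma expansion at the origin. This is a reasonable and standard strategy (indeed it is essentially how such identities are derived in the Baker--Klein tradition that \cite{BEL} itself follows), and nothing you say is wrong in principle. But you should be aware that you are doing far more work than the paper asks for: here the result is an imported tool, and the only content of the paper's proof is the notational dictionary. If your goal is to match the paper, a one-line citation suffices; if your goal is a self-contained derivation, your outline is on the right track but, as you yourself note, the coefficient bookkeeping for \eqref{3} and the boundary Kronecker terms would need to be carried out in full before the argument is complete.
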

\begin{proof}
 In \cite{BEL} we have formulas (4.1) and (4.8). Using the notation \eqref{n} we get \eqref{2} from~(4.1) and \eqref{3} from (4.8).
\end{proof}

\eject

\begin{cor} \label{c1}
 Consider the map $\varphi: \mathcal{U} \dashrightarrow \mathbb{C}^{ \frac{g (g+9)}{2}}$, where in $\mathbb{C}^{ \frac{g (g+9)}{2}}$ we denote
 the coordinates by
 $(x, w, \lambda)$. Here for $x = (x_{i,j}) \in \mathbb{C}^{3g}$ we have $i \in \{ 1,2,3 \}$, \mbox{$j \in \{1, 3, \ldots, 2 g -1\}$,}
 for $w = (w_{k,l}) \in \mathbb{C}^{\frac{g (g-1)}{2}}$ we have $k,l \in \{3, 5, \ldots, 2 g -1\}$, $k \leqslant l$,
 and for $\lambda = (\lambda_s) \in \mathbb{C}^{2 g}$ we have $s \in \{4, 6, \ldots, 4 g, 4 g + 2\}$.
 For $(u, \lambda) = (u_1, u_3, \ldots, u_{2g-1}, \lambda_4, \lambda_6, \ldots, \lambda_{4 g + 2})$
 set 
 \[
  \varphi: (u, \lambda) \mapsto (x_{i,j}, w_{k,l}, \lambda_s) = (\wp_{i;j}(u, \lambda), \wp_{0;k,l}(u, \lambda), \lambda_s).
 \]
 We also denote $x_{i+1} = x_{i,1}$, $i = 1,2,3$, and $w_{l,k} = w_{k,l}$.
 
Then the image of $\varphi$ lies in $\mathcal{S} \subset \mathbb{C}^{ \frac{g (g+9)}{2}}$, where $\mathcal{S}$ is determined by the set of $\frac{g (g+3)}{2}$ equations
\begin{align}
x_{4} = & 6 x_{2}^2 + 4 x_{1, 3} + 2 \lambda_{4}, \label{e1} \\
x_{3, k} = & 6 x_{2} x_{1, k} + 6 x_{1, k+2} - 2 w_{3, k}, \label{e2} \\
x_{3}^2 = & 4 x_2^3 + 4 x_{2} x_{1, 3} - 4 x_{1, 5} + 4 w_{3, 3} + 4 \lambda_4 x_{2} + 4 \lambda_{6}, \label{e3} \\
x_{3} x_{2, k} = & 4 x_2^2 x_{1, k} + 2 x_{1, 3} x_{1, k} + 4 x_{2} x_{1, k+2} - 2 x_{1, k+4} + \label{4} \\
   & - 2 x_{2} w_{3,k} + 4 w_{3, k+2} - 2 w_{5, k} + 2 \lambda_4 x_{1, k} + 2 \lambda_{8} \delta_{3, k}, \nonumber \\
x_{2,j} x_{2,k} = & 4 x_2 x_{1, j} x_{1, k} + 4 x_{1, k} x_{1, j+2} + 4 x_{1, j} x_{1, k+2} + 4 w_{k+2, j+2}
   - \label{5} \\
   & - 2 x_{1, j} w_{3,k} - 2 x_{1, k} w_{3,j} - 2 w_{k, j+4} - 2 w_{j, k+4} + 2 \lambda_{j+k+4} (2 \delta_{j,k} + \delta_{k, j-2} + \delta_{j, k-2}), \nonumber
\end{align}
for $j,k \in \{3, \ldots, 2g-1\}$ and any variable equal to zero if the index is out of range.
\end{cor}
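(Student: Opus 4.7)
The plan is to verify each defining equation of $\mathcal{S}$ as a direct translation of the preceding theorem under the coordinate identification built into $\varphi$. Since $\varphi$ is specified by $x_{i,j} = \wp_{i;j}$, $w_{k,l} = \wp_{0;k,l}$ and $\lambda_s = \lambda_s$, showing $\varphi(\mathcal{U}) \subset \mathcal{S}$ reduces to showing that \eqref{2} and \eqref{3}, after these substitutions, become exactly \eqref{e1}--\eqref{5}. No new analytic input is required beyond the theorem quoted from \cite{BEL}.

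I would proceed case by case. Equation \eqref{e1} is the $i=1$ specialization of \eqref{2}: here $\delta_{i,1}=1$ and, crucially, $\wp_{0;3,1}$ must be identified with $\wp_{1;3}$ by the definition \eqref{n} (both equal $-\partial^2_{u_1 u_3}\ln\sigma$), which is $x_{1,3}$, \emph{not} a $w$-coordinate; after combining like terms this gives $x_4 = 6x_2^2+4x_{1,3}+2\lambda_4$, using also $\wp_{3;1}=x_4$ and $\wp_2=x_2$. Equations \eqref{e2} are the $i\in\{3,\ldots,2g-1\}$ specializations of \eqref{2}: now $\delta_{i,1}=0$, and $\wp_{0;3,i}$ stays as $w_{3,i}$.

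Equation \eqref{e3} is \eqref{3} at $i=k=1$: the bracketed Kronecker combination $2\delta_{i,k}+\delta_{k,i-2}+\delta_{i,k-2}$ collapses to $2$, every $\wp_{1;1}$ folds into $x_2$, every $\wp_{0;1,m}$ folds into $x_{1,m}$, while $\wp_{0;3,3}$ becomes $w_{3,3}$. Equation \eqref{4} is \eqref{3} with $i=1$, $k\in\{3,\ldots,2g-1\}$: the left-hand side is $x_3\,x_{2,k}$; the $\wp_{1;1}$ factors collapse to $x_2$, the mixed terms $\wp_{0;1,k}$ and $\wp_{0;1,k+4}$ fold into $x_{1,k}$ and $x_{1,k+4}$, whereas $\wp_{0;3,k}$, $\wp_{0;3,k+2}$ and $\wp_{0;5,k}$ remain as $w_{3,k}$, $w_{3,k+2}$, $w_{5,k}$; the $\lambda$-terms on the right reduce to $2\lambda_4 x_{1,k} + 2\lambda_{k+5}\delta_{3,k}$, which matches \eqref{4}. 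Finally, \eqref{5} is the generic case $j,k\in\{3,\ldots,2g-1\}$ of \eqref{3}: no Kronecker delta involving $1$ fires, no second index collapses onto $u_1$, and every $\wp_{0;\cdot,\cdot}$ on the right translates directly to a $w$-coordinate.

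A small separate check handles the out-of-range convention: whenever a derived symbol $\wp_{1;m}$ or $\wp_{0;m,n}$ has an index exceeding $2g-1$, it refers to a derivative in a variable $u_m$ that does not exist and so is understood as $0$, matching the stipulation in the statement. The only substantive bookkeeping difficulty, and the one place where it is easy to slip, is keeping straight which $\wp_{0;\cdot,\cdot}$ terms must be pushed into the $x$-coordinates via the identity $\wp_{0;k,1}=\wp_{1;k}$ and which stay as $w$-coordinates; once this dichotomy is tabulated once and for all, the rest of the proof is termwise comparison.
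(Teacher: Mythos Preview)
Your proposal is correct and matches the paper's approach: the paper states this as a corollary with no separate proof, since it follows from the preceding theorem by exactly the termwise substitution you carry out. Your explicit identification of the key bookkeeping point---that $\wp_{0;m,1}=\wp_{1;m}$ forces certain terms into the $x$-coordinates rather than the $w$-coordinates---is precisely the content behind the corollary being ``immediate.''
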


\begin{thm} \label{thm3}
The projection $\pi_1\colon \mathbb{C}^{\frac{g (g+9)}{2}} \to \mathbb{C}^{3 g}$ on the first $3 g$ coordinates gives the isomorphism $\mathcal{S} \simeq \mathbb{C}^{3g}$.
Therefore, the coordinates $x$ uniformize $\mathcal{S}$.
\end{thm}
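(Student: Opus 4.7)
The plan is to construct an explicit polynomial inverse $\psi\colon \mathbb{C}^{3g} \to \mathcal{S}$ to $\pi_1|_\mathcal{S}$ by expressing every auxiliary coordinate $w_{k,l}$ and $\lambda_s$ as a polynomial in $x = (x_{i,j})$. Once such a $\psi$ is in hand, $\pi_1 \circ \psi = \mathrm{id}_{\mathbb{C}^{3g}}$ holds by construction, while the uniqueness of the solution at each step of the triangular system forces $\psi \circ \pi_1|_\mathcal{S} = \mathrm{id}_\mathcal{S}$, yielding the desired polynomial isomorphism $\mathcal{S} \simeq \mathbb{C}^{3g}$.

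The structural observation driving the argument is that all defining equations of $\mathcal{S}$ are homogeneous for the grading $\deg x_{i,j} = i+j$, $\deg w_{k,l} = k+l$, $\deg \lambda_s = s$, and that the number of defining equations $\tfrac{g(g+3)}{2}$ matches exactly the number of auxiliary coordinates $\tfrac{g(g-1)}{2} + 2g$. I would proceed by induction on the total grading $d$: for each $d$, the equations of degree $d$ are solved, in an appropriate order, for the auxiliary variables of degree $d$, using the $x$'s and the auxiliary variables of strictly smaller degree that are already expressed as polynomials in $x$ by the inductive hypothesis.

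The assignment of equations to unknowns is explicit: \eqref{e1} yields $\lambda_4$; \eqref{e2} with parameter $k$ yields $w_{3,k}$; \eqref{e3} yields $\lambda_6$ once $w_{3,3}$ is substituted; \eqref{4} with $k=3$ yields $\lambda_8$, while \eqref{4} with $k \geq 5$ yields $w_{5,k}$; and the $\tfrac{g(g-1)}{2}$ equations \eqref{5} collectively account for the $\tfrac{(g-2)(g-3)}{2}$ remaining variables $w_{k,l}$ with $k,l \geq 5$ together with the $2g-3$ variables $\lambda_s$ with $s \geq 10$, the counts matching perfectly. In each case the unknown to be solved for appears linearly with a nonzero integer coefficient, so the solving is a purely polynomial operation.

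The main obstacle lies in the matching inside the family \eqref{5}, since several of its equations may share a common degree. The $\delta$-factor $2\delta_{j,k} + \delta_{k,j-2} + \delta_{j,k-2}$ controls whether a given $\lambda_{j+k+4}$ actually appears in \eqref{5} with indices $(j,k)$; when it does not, the same equation must instead expose a new $w$-variable, and conversely. The plan is to inspect the pattern of indices and choose an explicit ordering, for instance lexicographic in the pair $(j+k, j)$, under which each equation of \eqref{5} of degree $d$ introduces exactly one new auxiliary variable of that degree, with the remaining occurrences already known by the inductive hypothesis or by the assignment above. For $g \leq 3$, where the paper concentrates its applications, the verification is immediate by direct enumeration of the very few equations involved.
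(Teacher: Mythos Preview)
Your proposal is correct and takes essentially the same approach as the paper's own proof: assign \eqref{e1}--\eqref{e3} to $\lambda_4$, $w_{3,k}$, $\lambda_6$, assign \eqref{4} to $\lambda_8$ (case $k=3$) and $w_{5,k}$ (case $k\geq 5$), and then use \eqref{5} recursively for the remaining $w$'s and $\lambda$'s. The paper carries out this last step via the explicit case split $j=k$, $j=k+2$, $j\geq k+4$ (solving respectively for $\lambda_{2k+4}$, $\lambda_{2k+6}$, $w_{k+4,j}$) and recurses on the smaller index $k=3,5,7,\ldots$ rather than on the total grading; this is precisely the ``explicit ordering'' your plan anticipates, though note that your suggested lex order in $(j+k,j)$ must run in the direction of \emph{increasing} $k$ (equivalently decreasing $j$) for the system within each degree to be triangular.
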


\begin{cor} \label{cor2}
The projection $\pi_2\colon \mathbb{C}^{\frac{g (g+9)}{2}} \to \mathbb{C}^{\frac{g (g-1)}{2}}$ on the second $\frac{g (g-1)}{2}$ coordinates
gives a polynomial map $\mathbb{C}^{3g} \to \mathbb{C}^{\frac{g (g+9)}{2}}$.
\end{cor}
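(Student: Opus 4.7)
The plan is to invoke Theorem~\ref{thm3}. By that theorem, $\pi_1$ restricts to an isomorphism $\pi_1|_{\mathcal{S}}\colon \mathcal{S} \to \mathbb{C}^{3g}$ of affine varieties, so the inverse map $\psi\colon \mathbb{C}^{3g} \to \mathcal{S} \subset \mathbb{C}^{\frac{g(g+9)}{2}}$ is given by polynomial expressions in the $x$-coordinates. The map in the statement is then the composition $\pi_2 \circ \psi$, which is polynomial because $\pi_2$ is a linear projection onto the $w$-coordinates and $\psi$ is polynomial.

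To see that $\psi$ is polynomial (and not merely rational), the relations \eqref{e1}--\eqref{5} cutting out $\mathcal{S}$ should be read as explicit polynomial formulas solving for the $w$- and $\lambda$-coordinates in terms of the $x$-coordinates. Equation \eqref{e1} rewrites immediately as $\lambda_4 = \tfrac{1}{2}(x_4 - 6 x_2^2 - 4 x_{1,3})$, a polynomial in $x$. Equation \eqref{e2} rewrites as $w_{3,k} = 3 x_2 x_{1,k} + 3 x_{1,k+2} - \tfrac{1}{2} x_{3,k}$ for each $k \in \{3,5,\ldots,2g-1\}$. Equation \eqref{e3} then yields $\lambda_6$ in terms of $x$ and the previously obtained $\lambda_4$, $w_{3,3}$. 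Proceeding inductively, equations \eqref{4} and \eqref{5} determine the remaining $w_{k,l}$ and $\lambda_s$ as polynomials in $x$, each equation being solved for exactly one new coordinate using those already expressed.

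Once these polynomial formulas $w_{k,l}(x)$ are in hand, the composition $\pi_2 \circ \psi$ sends $x \mapsto (w_{k,l}(x))$, which is manifestly a polynomial map $\mathbb{C}^{3g} \to \mathbb{C}^{\frac{g(g-1)}{2}}$, proving the corollary. The substantive content lies entirely in Theorem~\ref{thm3} (uniformization of $\mathcal{S}$ by the $x$-coordinates together with polynomiality of the inverse); once that is in place, Corollary~\ref{cor2} is a direct bookkeeping consequence, and I do not anticipate any obstacle beyond organizing the recursive substitution cleanly.
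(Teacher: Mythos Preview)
Your proposal is correct and follows essentially the same route as the paper: the paper proves Theorem~\ref{thm3} by explicitly solving \eqref{e1}--\eqref{5} recursively for the $w_{k,l}$ and $\lambda_s$ as polynomials in $x$, and then remarks that this simultaneously establishes Corollaries~\ref{cor2} and~\ref{cor3}. Your framing via the inverse $\psi$ of $\pi_1|_{\mathcal{S}}$ and the composite $\pi_2\circ\psi$ is just a clean repackaging of the same recursive substitution.
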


\begin{cor} \label{cor3}
The projection $\pi_3\colon \mathbb{C}^{\frac{g (g+9)}{2}} \to \mathbb{C}^{2 g}$ on the last $2 g$ coordinates gives a polynomial map $p\colon \mathbb{C}^{3g} \to \mathbb{C}^{2g}$.
\end{cor}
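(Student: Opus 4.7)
The plan is to build the map $p$ explicitly from the defining equations of $\mathcal{S}$ and then to check that the resulting expressions are polynomial in $x$. By Theorem \ref{thm3}, $\pi_1$ restricts to an isomorphism $\mathcal{S} \simeq \mathbb{C}^{3g}$, so there is an inverse $\psi \colon \mathbb{C}^{3g} \to \mathcal{S}$, and I would simply set $p = \pi_3 \circ \psi$. Polynomiality of $p$ then reduces to producing explicit polynomial formulas for each $\lambda_s$ in terms of the coordinates $x$.

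To obtain such formulas I would solve the defining equations of $\mathcal{S}$ for the $\lambda$'s in the following order. Equation \eqref{e1} gives $\lambda_4 = \tfrac{1}{2}(x_4 - 6 x_2^2 - 4 x_{1,3})$ immediately. Equation \eqref{e3} then expresses $\lambda_6$ as a polynomial in $x$, $\lambda_4$, and $w_{3,3}$. Equation \eqref{4} taken at $k = 3$ recovers $\lambda_8$, since there its coefficient is $2\lambda_8 \delta_{3,3} = 2\lambda_8$. For the remaining parameters $\lambda_{10}, \lambda_{12}, \ldots, \lambda_{4g+2}$ I would use equation \eqref{5}: taking $j = k$ supplies the indices $s = 2j+4$ for $j \in \{3, 5, \ldots, 2g-1\}$, while taking $k = j+2$ supplies the intermediate indices $s = 2j+6$. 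In both regimes the coefficient $2(2\delta_{j,k} + \delta_{k,j-2} + \delta_{j,k-2})$ of $\lambda_{j+k+4}$ in \eqref{5} is a nonzero constant ($4$ and $2$ respectively), so the equation can be solved for $\lambda_{j+k+4}$ as a polynomial in $x$, $w$, and previously computed $\lambda$'s of lower index.

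The final step is to invoke Corollary \ref{cor2}, which asserts that each $w_{k,l}$ is itself a polynomial in $x$. Substituting these expressions into the formulas obtained above converts every $\lambda_s$ into a polynomial in $x$ alone, and then $p = \pi_3 \circ \psi$ is a polynomial map by construction. The main point to verify is the combinatorial claim above, namely that the pairs $(j,j)$ and $(j, j+2)$ with $j \in \{3, 5, \ldots, 2g-1\}$ exhaust the set of even indices $\{10, 12, \ldots, 4g+2\}$ via $s = j+k+4$; this is a routine check on the index range $j, k \in \{3, \ldots, 2g-1\}$. Beyond that bookkeeping, the proof is a formal rearrangement of equations \eqref{e1}--\eqref{5}.
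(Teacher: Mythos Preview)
Your proposal is correct and follows essentially the same route as the paper: the paper's proof of Theorem \ref{thm3} solves equations \eqref{e1}--\eqref{5} exactly as you describe (obtaining $\lambda_4,\lambda_6$ from \eqref{e1}--\eqref{e3}, $\lambda_8$ from \eqref{4} at $k=3$, and the remaining $\lambda_{2k+4},\lambda_{2k+6}$ from \eqref{5} with $j=k$ and $j=k+2$), and notes at the end that this simultaneously proves Corollaries \ref{cor2} and \ref{cor3}. The only cosmetic difference is that you invoke Corollary \ref{cor2} as an already-established input, whereas the paper derives the polynomial expressions for $w$ and $\lambda$ in a single pass.
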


We obtain the diagram:
\[
\xymatrix{
	 & \mathbb{C}^{\frac{g (g+9)}{2}} \ar@{<-_{)}}[d] \ar@{=}[r] & \mathbb{C}^{3 g} \times \mathbb{C}^{\frac{g (g-1)}{2}} \times \mathbb{C}^{2 g} \ar[dl]^{\pi_1} \ar@/^/[ddl]^{\pi_3}\\
	\mathcal{U} \ar[d]^{\pi} \ar@{-->}[r]^(.33){\varphi}& \mathcal{S} \simeq \mathbb{C}^{3 g} \ar[d]^{p}\\
	\mathcal{B} \ar@{^{(}->}[r] & \mathbb{C}^{2g}\\
	} 
\]

\begin{proof}[Proof of theorem \ref{thm3}]

Equations \eqref{e1}--\eqref{e3} give
\begin{equation} 
\lambda_{4} = \frac{1}{2} x_{4} - 3 x_{2}^2 - 2 x_{1, 3}, \quad \lambda_{6} = \frac{1}{2} x_{3, 3} - 2 x_{1, 5} + \frac{1}{4} x_{3}^2 + 2 x_2^3 - 2 x_{2} x_{1, 3} - \frac{1}{2} x_{2} x_{4}, \label{6} \\
\end{equation}
\begin{equation} 
w_{3, k} = - \frac{1}{2} x_{3, k} + 3 x_{2} x_{1, k} + 3 x_{1, k+2}. \qquad \qquad \label{y3}
\end{equation}
For $k = 3$ equation \eqref{4} with \eqref{6} gives
\[
\lambda_{8} = - \frac{1}{2} (x_{2} x_{3, 3} - x_{3} x_{2, 3} + x_{4} x_{1, 3}) + (4 x_{2}^2 + x_{1, 3}) x_{1, 3} + \frac{1}{2} x_{3, 5} - 2 (x_{2} x_{1, 5} + x_{1, 7}).
\]
For $k \geqslant 5$ equation \eqref{4} with \eqref{6} gives
\begin{equation} \label{y5}
w_{5, k} = \frac{1}{2} (x_{2} x_{3, k} - x_{3} x_{2, k} + x_{4} x_{1, k}) - \left(4 x_{2}^2 + x_{1, 3}\right) x_{1, k} - x_{3, k+2} + 5 (x_{2} x_{1, k+2} + x_{1, k+4}).
\end{equation}

Now for equation \eqref{5} we can assume $j \geqslant k$. For $j = k$, $j = k + 2$ and $j \geqslant k+2$ respectively we obtain the equations
\begin{align}
 x_{2,k}^2 = & 4 x_2 x_{1, k}^2 + 8 x_{1, k} x_{1, k+2} - 4 x_{1, k} w_{3,k} + 4 w_{k+2, k+2} - 4 w_{k, k+4} + 4 \lambda_{2k+4}, \label{l4} \\
   x_{2,k+2} x_{2,k} = & 4 \left(x_2 x_{1, k+2} x_{1, k} + x_{1, k} x_{1, k+4} + x_{1, k+2} x_{1, k+2}\right) 
   - \label{l6} \\
   & - 2 (x_{1, k+2} w_{3,k} + x_{1, k} w_{3,k+2})  + 4  w_{k+2, k+4} - 2 w_{k, k+6} - 2 w_{k+2, k+4} + 2 \lambda_{2 k+ 6}, \nonumber \\
    x_{2,j} x_{2,k} = & 4 \left(x_2 x_{1, j} x_{1, k} + x_{1, k} x_{1, j+2} + x_{1, j} x_{1, k+2}\right) - 2 (x_{1, j} w_{3,k} + x_{1, k} w_{3,j})
    + \label{yk} \\
   & + 4 w_{k+2, j+2} - 2 w_{k, j+4} - 2 w_{j, k+4}. \nonumber
\end{align}
Now \eqref{y3}, \eqref{y5} and \eqref{yk} for $k = 3, 5, 7, \ldots$ give expressions for $w_{j, k+4}$ in terms of $x$ while 
\eqref{l4} and \eqref{l6} give expressions for $\lambda_{2k+4}$ and $\lambda_{2 k+ 6}$ in terms of $x$ and $w$.
We see that the expressions for $w$ and $\lambda$ are polynomial in $x$, which proves also Corollaries \ref{cor2} and~\ref{cor3}.
\end{proof}

\section{Generators of the polynomial Lia algebra in $\mathbb{C}^{3 g}$} \label{s6}

Now let us return to Problem \ref{p1}. The $\mathcal{F}$-module $\Der \mathcal{F}$ is determined by it's action on the generators of $\mathcal{F}$.
We take the $3 g$ functions $\wp_{i;j}(u, \lambda)$, where $i \in \{ 1,2,3 \}$ \mbox{and~$j \in \{1, 3, \ldots, 2 g -1\}$,} as these generators.
The values of these functions determine~$\lambda$ by Theorem~\ref{thm3}.

Denote the ring of polynomials in $\lambda \in \mathbb{C}^{2g}$ by $\mathcal{P}$.
Let us consider the polynomial map~$p\colon \mathbb{C}^{3g} \to \mathbb{C}^{2g}$.
A vector field $\mathcal{L}$ in $\mathbb{C}^{3g}$ will be called projectable for $p$ if there exists a vector field $L$ in $\mathbb{C}^{2g}$ such that
\[
 \mathcal{L}(p^* f) = p^* L(f) \quad \text{for any} \quad f \in \mathcal{P}.
\]
The vector field $L$ will be called the pushforward of $\mathcal{L}$.
A corollary of this definition is that for a projectable vector field $\mathcal{L}$ we have $\mathcal{L}(p^* \mathcal{P}) \subset p^* \mathcal{P}$.

Now we will consider the following problem.

\begin{prb} \label{p6}
Find $3 g$ polynomial vector fields in $\mathbb{C}^{3g}$ projectable for $p\colon \mathbb{C}^{3g} \to \mathbb{C}^{2g}$ and
independent at any point in $p^{-1}(\mathcal{B})$.
Construct their polynomial Lie algebra.
\end{prb}

Let us explain the connection to Problem \ref{p1}.
Given a solution to Problem \ref{p6} for each of the $3g$ vector fields~$\mathcal{L}_k$
with pushforwards $L_k$ we will restore the vector fields~$\mathscr{L}_k$ 
projectable for $\pi$ with pushforwards $L_k$ and such that $\mathscr{L}_k (\varphi^* x_{i,j}) = \varphi^* \mathcal{L}_k (x_{i,j})$
for the coordinate functions $x_{i,j}$ in $\mathbb{C}^{3g}$.
As $\varphi^* x_{i,j}$ are the generators of $\mathcal{F}$ and $\mathcal{L}_k (x_{i,j})$ is a polynomial in $x_{i,j}$,
this gives $\mathscr{L}_k (\varphi^* x_{i,j}) \in \mathcal{F}$ and $\mathscr{L}_k \in \Der \mathcal{F}$.

Now we will give a solution to Problem \ref{p6} for $g = 1,2,3$.

We denote by $\mathcal{T}$ the $3 g \times 3 g$ matrix with elements $\mathcal{L}_k(x_{i,j})$ for $k = \{ 0, 2, \ldots, 4 g - 2\} \cup \{ 1, 3, \ldots 2 g - 1\}$ and 
$i \in \{ 1,2,3 \}$, $j \in \{1, 3, \ldots, 2 g -1\}$.

The vector field $\mathcal{L}_0$ is the Euler vector field on $\mathbb{C}^{3g}$, we have
\begin{align}
\mathcal{L}_0 &= \sum_j (j+1) x_{1,j} {\partial \over \partial x_{1,j}} + (j+2) x_{2,j} {\partial \over \partial x_{2,j}} +
(j+3) x_{3, j} {\partial \over \partial x_{3,j}}. \label{L0}
\end{align}

Then
$\mathcal{L}_0(p^* f) = p^* L_0(f)$ for any $f \in \mathcal{P}$.

Denote by $\mathcal{L}_k$ for $k \in \{1, 3, \ldots, 2 g -1\}$ polynomial vector fields in $\mathbb{C}^{3g}$
such that $\mathscr{L}_k(\varphi^*(x_{i,j})) = \varphi^* \mathcal{L}_k(x_{i,j})$. We have $\mathcal{L}_k(p^* f) = 0$ for any $f \in \mathcal{P}$.

\begin{lem}
 We have 
\begin{align}
\mathcal{L}_1 &= \sum_j x_{2,j} {\partial \over \partial x_{1,j}} + x_{3, j} {\partial \over \partial x_{2,j}} + 4 (2 x_{2} x_{2,j} + x_{3} x_{1, j} + x_{2, j+2}) {\partial \over \partial x_{3,j}}
\label{L1}
\end{align}
where $x_{2,2 g + 1} = 0$. 
\end{lem}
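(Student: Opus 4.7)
The plan is to verify the defining identity $\mathscr{L}_1(\varphi^* x_{i,j}) = \varphi^*\mathcal{L}_1(x_{i,j})$ coordinate by coordinate. Since $\mathscr{L}_1 = \partial_{u_1}$, one has to compute $\partial_{u_1}\wp_{i;j}$ and re-express it as a polynomial in the generators $\wp_{i';j'}$; such a representation is guaranteed by Theorem \ref{thm3}, which says that the coordinates $x$ uniformise $\mathcal{S}$.

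For $i=1,2$ the verification is immediate from the definition \eqref{n}: one has $\partial_{u_1}\wp_{1;j} = \wp_{2;j}$ and $\partial_{u_1}\wp_{2;j} = \wp_{3;j}$, which reproduce the coefficients of $\partial_{x_{1,j}}$ and $\partial_{x_{2,j}}$ in the displayed formula for $\mathcal{L}_1$.

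The substantive case is $i=3$, where the content of the lemma is the identity
\[
\wp_{4;j} = 4\bigl(2\wp_2\wp_{2;j} + \wp_3\wp_{1;j} + \wp_{2;j+2}\bigr)
\]
(with $\wp_{2;2g+1}=0$). I would derive it by extracting two independent consequences of \eqref{2} differentiated in distinct directions. First, differentiating \eqref{e1} (i.e.\ \eqref{2} at $i=1$) with respect to $u_j$ gives
\[
\wp_{4;j} = 12\wp_2\wp_{2;j} + 4\wp_{1;3,j}.
\]
Second, differentiating \eqref{2} at $i=j$ with respect to $u_1$ gives
\[
\wp_{4;j} = 6\wp_3\wp_{1;j} + 6\wp_2\wp_{2;j} + 6\wp_{2;j+2} - 2\wp_{1;3,j}.
\]
Doubling the second and adding to the first eliminates $\wp_{1;3,j}$ and yields $3\wp_{4;j} = 12\wp_3\wp_{1;j} + 24\wp_2\wp_{2;j} + 12\wp_{2;j+2}$, which rearranges to the claim. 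The boundary value $j=2g-1$ is covered by the ``index out of range means zero'' convention introduced after \eqref{5}, matching the stated $x_{2,2g+1}=0$.

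The main subtlety is producing two genuinely independent linear equations in $\wp_{4;j}$ and $\wp_{1;3,j}$: differentiating \eqref{2} by $u_1$ alone and substituting back an expression for $\wp_{1;3,j}$ derived from \eqref{2} itself produces only a tautology. Pairing $\partial_{u_j}$ of \eqref{e1} with $\partial_{u_1}$ of \eqref{2} at $i=j$ exploits the commutativity of mixed partials in two distinct ways and breaks this circularity. Once the identity for $\wp_{4;j}$ is proved, the auxiliary statement $\mathcal{L}_1(p^*f)=0$ for all $f\in\mathcal{P}$ follows at once since $\partial_{u_1}$ annihilates the $\lambda$-parameters, or alternatively by direct substitution into the polynomial expressions for $\lambda_s(x)$ obtained in the proof of Theorem \ref{thm3}.
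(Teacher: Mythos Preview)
Your proof is correct and follows essentially the same approach as the paper. The paper phrases the two differentiations as applying the polynomial vector fields $\mathcal{L}_k$ to \eqref{e1} and $\mathcal{L}_1$ to \eqref{e2}, while you differentiate the underlying $\wp$-identities \eqref{2} by $u_j$ and $u_1$ respectively; these are the same computations, and the elimination of $\wp_{1;3,j}$ (the paper's $p_{1,3,k}$) proceeds identically.
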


\begin{proof}
By definition we have $\mathcal{L}_1(x_{1,k}) = x_{2,k}$ and $\mathcal{L}_1(x_{2,k}) = x_{3,k}$.
Denote the polynomial $\mathcal{L}_1(x_{3,k})$ by $x_{4,k}$. We have $\varphi^* x_{4,k} = \wp_{4;k}$.
By definition we have \mbox{$\mathcal{L}_k(x_{2}) = x_{2,k}$,} \mbox{$\mathcal{L}_k(x_{3}) = x_{3,k}$,} $\mathcal{L}_k(x_{4}) = x_{4,k}$.
Denote the polynomial $\mathcal{L}_k(x_{1,3})$ by $p_{1,3,k}$. Therefore, we have \mbox{$\varphi^* p_{1,3,k} = \wp_{1;3,k}$} and $\mathcal{L}_1(w_{3,k}) = p_{1,3,k}$.

Applying $\mathcal{L}_k$  and $\mathcal{L}_1$ to \eqref{e1} and \eqref{e2} we get
\begin{align*}
x_{4,k} = & 12 x_{2} x_{2,k} + 4 p_{1, 3, k}, &
x_{4,k} = & 6 x_{3} x_{1, k} +  6 x_{2} x_{2, k} + 6 x_{2, k+2} - 2 p_{1,3, k}, 
\end{align*}
thus $x_{4,k} = 8 x_{2} x_{2,k} + 4 x_{3} x_{1, k} + 4 x_{2, k+2}$.
\end{proof}

\begin{lem}
 For $s = 3, 5, \ldots, 2g - 1$ we have 
\begin{align}
\mathcal{L}_{s} &= x_{2,s} {\partial \over \partial x_{2}} + x_{3,s} {\partial \over \partial x_{3}} \label{Ls}
+ \mathcal{L}_1(x_{3,s}) {\partial \over \partial x_{4}} + \\ & +
\sum_{k=1}^{g-1} \mathcal{L}_1(w_{s,2k+1}) {\partial \over \partial x_{1,2k+1}} + \mathcal{L}_1(\mathcal{L}_1(w_{s,2k+1})) {\partial \over \partial x_{2,2k+1}}
+ \mathcal{L}_1(\mathcal{L}_1(\mathcal{L}_1(w_{s,2k+1}))) {\partial \over \partial x_{3,2k+1}}. \nonumber
\end{align}
\end{lem}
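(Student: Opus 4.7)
The plan is to verify the claimed expression for $\mathcal{L}_s$ by testing it against each of the $3g$ coordinate functions $x_{i,j}$, since a polynomial vector field on $\mathbb{C}^{3g}$ is determined by its action on these coordinates. The defining property of $\mathcal{L}_s$ recorded in Section~\ref{s6} is $\mathscr{L}_s(\varphi^* x_{i,j}) = \varphi^*\mathcal{L}_s(x_{i,j})$, together with $\mathscr{L}_s = \partial/\partial u_s$. So my strategy is to compute $\partial_{u_s}\wp_{i;j}$, re-express the result through Theorem~\ref{thm3} as a polynomial in $x$ (possibly via the polynomial expressions of $w$ in $x$), and compare with the coefficients of the claimed formula.

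I would split the verification into the $j=1$ block and the $j \geqslant 3$ block. For $j=1$, the coordinates are $x_2 = \wp_2$, $x_3 = \wp_3$, $x_4 = \wp_4$; the equalities $\mathcal{L}_s(x_2) = x_{2,s}$ and $\mathcal{L}_s(x_3) = x_{3,s}$ are immediate from the definition already noted in Section~\ref{s6}, and for $x_4$ one uses commutativity of the $u$-partials to rewrite $\partial_{u_s}\wp_4 = \partial_{u_1}\wp_{3;s}$, which pulls back to $\mathcal{L}_s(x_4) = \mathcal{L}_1(x_{3,s})$, matching the third term.

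For $j = 2k+1$ with $k \geqslant 1$, the central observation is again that all $\partial_{u_m}$ commute and that $\varphi^* w_{s,2k+1} = \wp_{0;s,2k+1} = -\partial_{u_s}\partial_{u_{2k+1}}\ln\sigma$. Hence
\[
\partial_{u_s}\wp_{i;2k+1} \,=\, -\partial_{u_1}^i\partial_{u_s}\partial_{u_{2k+1}}\ln\sigma \,=\, \partial_{u_1}^i\,\wp_{0;s,2k+1}
\]
for $i=1,2,3$, which on pull-back yields $\mathcal{L}_s(x_{i,2k+1}) = \mathcal{L}_1^i(w_{s,2k+1})$, exactly the three summands indexed by $k$ in the claim.

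The point I expect to require the most care is ensuring that the right-hand side is actually a \emph{polynomial} vector field on $\mathbb{C}^{3g}$, i.e.\ that the coefficients $\mathcal{L}_1^i(w_{s,2k+1})$ and $\mathcal{L}_1(x_{3,s})$ are polynomial in $x$. This is guaranteed by Theorem~\ref{thm3} (through which $w_{s,2k+1}$ is a polynomial in $x$) together with the explicit polynomial form of $\mathcal{L}_1$ in \eqref{L1}; iterated application of a polynomial-coefficient field preserves polynomiality. Once this is confirmed, the displayed vector field agrees with $\mathcal{L}_s$ on the $3g$ generators of the coordinate ring of $\mathbb{C}^{3g}$ and therefore equals~$\mathcal{L}_s$.
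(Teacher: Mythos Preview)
Your proposal is correct and follows essentially the same approach as the paper: both arguments rest on the commutativity of the $u$-partials (equivalently, of $\mathcal{L}_1$ and $\mathcal{L}_s$) together with the identification $\varphi^* w_{s,2k+1} = \wp_{0;s,2k+1}$, from which $\mathcal{L}_s(x_{i,2k+1}) = \mathcal{L}_1^i(w_{s,2k+1})$ and $\mathcal{L}_s(x_4) = \mathcal{L}_1(x_{3,s})$ follow. The paper's version is simply terser, noting that $\mathcal{L}_s(x_i) = x_{i,s}$ and $\mathcal{L}_s(x_{1,2k+1}) = \mathcal{L}_1(w_{s,2k+1})$ and remarking that commutativity then determines the remaining coefficients; your additional remark on polynomiality via Theorem~\ref{thm3} and the explicit form \eqref{L1} is a welcome clarification the paper leaves implicit.
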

\begin{proof}
By definition the vector fields $\mathcal{L}_1$ and $\mathcal{L}_{s}$ for $s = 3, 5, \ldots, 2g - 1$ commute. We have
$\mathcal{L}_{s}(x_i) = x_{i,s}$ and $\mathcal{L}_{s}(x_{1,2k+1}) = \mathcal{L}_{1}(w_{s,2k+1})$. This determines the coefficients.
\end{proof}

\begin{cor} \label{cLs}
Knowing $\mathcal{L}_1$ and $\mathcal{L}_{s}(x_{1,j})$ determines the coefficients  $\mathcal{L}_{s}(x_{2,j})$ and  $\mathcal{L}_{s}(x_{3,j})$.
\end{cor}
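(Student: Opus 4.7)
The plan is to use the commutativity of $\mathcal{L}_1$ and $\mathcal{L}_s$ (stated in the previous lemma) together with the definition of $\mathcal{L}_1$ on the coordinates $x_{1,j}$ and $x_{2,j}$. Recall from the proof of the lemma giving $\mathcal{L}_1$ that $\mathcal{L}_1(x_{1,j}) = x_{2,j}$ and $\mathcal{L}_1(x_{2,j}) = x_{3,j}$. Thus the coordinates of the second and third ``row'' of the $x$-array are obtained from those of the first row by applying $\mathcal{L}_1$ once or twice.

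The key step is then the identity
\[
\mathcal{L}_s(x_{2,j}) \;=\; \mathcal{L}_s\bigl(\mathcal{L}_1(x_{1,j})\bigr) \;=\; \mathcal{L}_1\bigl(\mathcal{L}_s(x_{1,j})\bigr),
\]
which uses $[\mathcal{L}_1, \mathcal{L}_s] = 0$. Iterating gives
\[
\mathcal{L}_s(x_{3,j}) \;=\; \mathcal{L}_s\bigl(\mathcal{L}_1(x_{2,j})\bigr) \;=\; \mathcal{L}_1\bigl(\mathcal{L}_s(x_{2,j})\bigr) \;=\; \mathcal{L}_1^2\bigl(\mathcal{L}_s(x_{1,j})\bigr).
\]
Since $\mathcal{L}_1$ is given explicitly by \eqref{L1} and the polynomials $\mathcal{L}_s(x_{1,j})$ are assumed known, the right-hand sides are computable polynomials in $x$. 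This determines the coefficients $\mathcal{L}_s(x_{2,j})$ and $\mathcal{L}_s(x_{3,j})$ of $\mathcal{L}_s$.

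There is essentially no obstacle: once commutativity is invoked, the claim reduces to a direct application of the already-known formula for $\mathcal{L}_1$. The only point worth checking is that this prescription is consistent with the explicit form \eqref{Ls}, where the coefficient of $\partial/\partial x_{2,2k+1}$ is $\mathcal{L}_1(\mathcal{L}_1(w_{s,2k+1}))$ and that of $\partial/\partial x_{3,2k+1}$ is $\mathcal{L}_1^3(w_{s,2k+1})$; this consistency follows from the relation $\mathcal{L}_s(x_{1,2k+1}) = \mathcal{L}_1(w_{s,2k+1})$ noted in the proof of \eqref{Ls}, so that iterating $\mathcal{L}_1$ on $\mathcal{L}_s(x_{1,j})$ reproduces precisely those coefficients.
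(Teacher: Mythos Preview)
Your proof is correct and follows the same idea as the paper: the corollary is immediate from the commutativity $[\mathcal{L}_1,\mathcal{L}_s]=0$ established in the preceding lemma together with $\mathcal{L}_1(x_{1,j})=x_{2,j}$ and $\mathcal{L}_1(x_{2,j})=x_{3,j}$, yielding $\mathcal{L}_s(x_{2,j})=\mathcal{L}_1(\mathcal{L}_s(x_{1,j}))$ and $\mathcal{L}_s(x_{3,j})=\mathcal{L}_1^2(\mathcal{L}_s(x_{1,j}))$. The paper leaves this implicit as a direct consequence of the lemma; you have simply written out the one-line argument.
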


\section{Genus 1} \label{s7}

In this section we give polynomial vector fields satisfying Problem \ref{p6} in the case $g = 1$.

The map $p$ takes the form
\begin{align*}
\lambda_4 &= - 3 x_2^2 + {1 \over 2} x_4, &
\lambda_6 &= 2 x_2^3 + {1 \over 4} x_3^2 - {1 \over 2}  x_2 x_4.
\end{align*}
The vector fields are
\[
\begin{pmatrix}
\mathcal{L}_0\\
\mathcal{L}_1\\
\mathcal{L}_2
\end{pmatrix}
=
\begin{pmatrix}
2 x_2 & 3 x_3 & 4 x_4 \\
x_3 & x_4 & 12 x_2 x_3\\
{2\over3} x_4 - 2 x_2^2 & 3 x_2 x_3 & 2 x_2 x_4 + 3 x_3^2
\end{pmatrix}
\begin{pmatrix}
{\partial \over \partial x_2}\rule[-7pt]{0pt}{16pt}\\
{\partial \over \partial x_3}\rule[-7pt]{0pt}{16pt}\\
{\partial \over \partial x_4}\rule[-7pt]{0pt}{16pt}
\end{pmatrix}.
\]
The middle matrix is $\mathcal{T}$. We have
$\det \mathcal{T} = 4 \det T$ for $T$ determined by \eqref{T1}. Therefore, the vector fields $\mathcal{L}_k$ are independent at any point of $p^{-1}(\mathcal{B})$.

The polynomial Lie algebra is
\begin{align*}
[\mathcal{L}_0, \mathcal{L}_1] &= \mathcal{L}_1, & [\mathcal{L}_0, \mathcal{L}_2] &= 2 \mathcal{L}_2, &
[\mathcal{L}_1, \mathcal{L}_2] &= x_2 \mathcal{L}_1.
\end{align*}

\section{Genus 2} \label{s8}
In this section we give polynomial vector fields satisfying Problem \ref{p6} in the case $g = 2$.

In the formulas below we write $y_4$ instead of $x_{1,3}$, $y_5$ instead of $x_{2,3}$ and $y_6$ instead of~$x_{3,3}$ to shorten down the formulas.

The map $p$ takes the form
\begin{align}
\lambda_{4} &= - 3 x_{2}^2 + {1 \over 2} x_{4} - 2 y_4, & \lambda_{6} &= 2 x_{2}^3 + {1 \over 4} x_{3}^2 - {1 \over 2}  x_{2} x_{4} - 2 x_{2} y_4 + {1 \over 2} y_6,
\nonumber \\
\lambda_{8} &= \left(4 x_{2}^2 + y_4\right) y_4 - {1 \over 2} (x_{4} y_4 - x_{3} y_5 + x_{2} y_6), &
\quad \lambda_{10} &= 2 x_{2} y_4^2 + {1 \over 4} y_5^2 - {1 \over 2} y_4 y_6. \label{l2}
\end{align}
By \eqref{L0} and \eqref{L1} we have
\begin{align*}
\mathcal{L}_0 &= 2 x_{2} {\partial \over \partial x_{2}} + 3 x_{3} {\partial \over \partial x_{3}} +
4 x_{4} {\partial \over \partial x_{4}} + 4 y_4 {\partial \over \partial y_4} + 5 y_5 {\partial \over \partial y_5} +
6 y_6 {\partial \over \partial y_6},\\
\mathcal{L}_1 &= 
x_3 {\partial \over \partial x_2} + x_4 {\partial \over \partial x_3}
+ 4 (3 x_2 x_3 + y_5) {\partial \over \partial x_4}
+ y_5 {\partial \over \partial y_4} + y_6 {\partial \over \partial y_5}
+ 4 (2 x_2 y_5 + x_3 y_4) {\partial \over \partial y_6}.
\end{align*}
According to \eqref{Ls} and Corollary \ref{cLs} the vector field $\mathcal{L}_3$ is determined by
\begin{align*}
\mathcal{L}_3(x_2) &= y_5, & \mathcal{L}_3(y_4) &= x_3 y_4 - x_2 y_5.
\end{align*}
We have
\begin{align*}
\mathcal{L}_3(x_3) &= y_6, & \mathcal{L}_3(y_5) &= x_4 y_4 - x_2 y_6,\\
\mathcal{L}_3(x_4) &= 4 (2 x_2 y_5 + x_3 y_4), & \mathcal{L}_3(y_6) &= 8 x_2 x_3 y_4 - 8 x_2^2 y_5 + x_4 y_5 - x_3 y_6 + 4 y_4 y_5.
\end{align*}

Set $p_7 = \mathcal{L}_1(w_6) = \mathcal{L}_3(y_4)$, $w_9 = \mathcal{L}_3(w_6)$.
By \eqref{y3} we have $w_6 = 3 x_2 y_4 - {1 \over 2} y_6$,
\begin{align*}
p_7 &= x_3 y_4 - x_2 y_5, &
w_9 &= - x_2 x_3 y_4  + x_2^2 y_5 - {1 \over 2} \left(x_4 y_5 - x_3 y_6\right) + y_4 y_5.
\end{align*}

Now we introduce the vector fields
$\widehat{\mathcal{L}}_2, \widehat{\mathcal{L}}_4, \widehat{\mathcal{L}}_6$.
They are determined up to constants \mbox{$\alpha$, $\beta$, $\gamma_1$} and $\gamma_2$ by $\widehat{\mathcal{L}}_2 = \mathcal{L}_2$,
$\widehat{\mathcal{L}}_4 = \mathcal{L}_4 + \alpha x_3 \mathcal{L}_1$ and
$\widehat{\mathcal{L}}_6 = \mathcal{L}_6 + \beta x_3 \mathcal{L}_3 + (\gamma_1 y_5 + \gamma_2 x_2 x_3) \mathcal{L}_1$,
where
\begin{align*}
\mathcal{L}_2(x_2) &= {8 \over 5} \lambda_{4} + 2 x_2^2 + 4 y_4, \qquad \quad \, \mathcal{L}_2(y_4) = - {4 \over 5} \lambda_{4} x_2 + 2 x_2 y_4,\\
\mathcal{L}_2(x_3) &= 3 x_2 x_3 + 5 y_5, \qquad \qquad \quad \mathcal{L}_2(y_5) = - {4 \over 5} \lambda_{4} x_3 + 3 x_3 y_4,\\
\mathcal{L}_2(x_4) &= 2 x_2 x_4 + 3 x_3^2 + 6 y_6, \qquad \;
\mathcal{L}_2(y_6) = - {4 \over 5} \lambda_{4} x_4 + 4 x_4 y_4 + 3 x_3 y_5 - 2 x_2 y_6,\\
\mathcal{L}_4(x_2) &= {2 \over 5} \lambda_{6} - 2 x_2 y_4 + y_6,\quad 
\mathcal{L}_4(x_3) = x_3 y_4 + 5 x_2 y_5, \quad 
\mathcal{L}_4(x_4) = 6 x_3 y_5 + 4 x_2 y_6,\\
\mathcal{L}_4(y_4) &= - {6 \over 5} \lambda_{6} x_2 + 2 \lambda_{4} y_4 - 4 x_2^2 y_4 + x_4 y_4 - {1 \over 2} x_3 y_5,\\
\mathcal{L}_4(y_5) &= - {6 \over 5} \lambda_{6} x_3 + 2 \lambda_{4} y_5 + 2 x_2 x_3 y_4 - 2 x_2^2 y_5 + 4 y_4 y_5 - w_9,\\
\mathcal{L}_4(y_6) &= - {6 \over 5} \lambda_{6} x_4 + 2 \lambda_{4} y_6 + x_3^2 y_4 + 2 x_2 x_4 y_4 - x_2 x_3 y_5 - 2 x_2^2 y_6 + 5 y_5^2 + 2 y_4 y_6,\\
\mathcal{L}_6(x_2) &= {1 \over 5} \lambda_{8} + {1 \over 2} \left(x_4 y_4 - x_2 y_6\right) - y_4^2, \qquad \qquad
\mathcal{L}_6(x_3) = 3 x_2 p_7 - w_9,\\
\mathcal{L}_6(x_4) &= 2 x_3^2 y_4 + 4 x_2 x_4 y_4 - 2 x_2 x_3 y_5 - 4 x_2^2 y_6 + y_5^2 - 2 y_4 y_6,\\
\mathcal{L}_6(y_4) &= - {8 \over 5} \lambda_{8} x_2 + 2 \lambda_{6} y_4 - 2 x_2 y_4^2 - y_5^2 + y_4 y_6,\\
\mathcal{L}_6(y_5) &= - {8 \over 5} \lambda_{8} x_3 + 2 \lambda_{6} y_5 + x_3 y_4^2 + 5 x_2 y_4 y_5 - y_5 y_6,\\
\mathcal{L}_6(y_6) &= - {8 \over 5} \lambda_{8} x_4 + 2 \lambda_{6} y_6 + 3 x_3 y_4 y_5 - 3 x_2 y_5^2 + 6 x_2 y_4 y_6 - y_6^2.
\end{align*}

\begin{lem}
The vector fields $\mathcal{L}_k$, $k = 0,1,3$, and $\widehat{\mathcal{L}}_k$, $k = 2,4,6$, give a solution to Problem \ref{p6}.
\end{lem}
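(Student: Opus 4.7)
The plan is to verify the four requirements of Problem \ref{p6} in turn: polynomial coefficients, projectability for $p$, pointwise independence on $p^{-1}(\mathcal{B})$, and existence of a polynomial Lie algebra. Polynomiality is immediate on inspection: after eliminating every $\lambda_s$ that appears in the coefficients of $\widehat{\mathcal{L}}_k$ by its polynomial expression \eqref{l2}, each coefficient is a polynomial in $x_2,x_3,x_4,y_4,y_5,y_6$.

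For projectability, I first observe that $\widehat{\mathcal{L}}_k-\mathcal{L}_k$ is a polynomial combination of $\mathcal{L}_1$ and $\mathcal{L}_3$, and both $\mathcal{L}_1$ and $\mathcal{L}_3$ annihilate $p^*\mathcal{P}$ by construction in Section~\ref{s6}; hence $\widehat{\mathcal{L}}_k$ is projectable with the same pushforward as $\mathcal{L}_k$. The identity $\mathcal{L}_0(p^*\lambda_s)=s\,p^*\lambda_s=p^*L_0(\lambda_s)$ is weighted homogeneity. For $k\in\{2,4,6\}$ the polynomial identities $\mathcal{L}_k(p^*\lambda_s)=p^*L_k(\lambda_s)$ with $s\in\{4,6,8,10\}$ are checked by direct substitution, using the listed actions $\mathcal{L}_k(x_\ell)$, $\mathcal{L}_k(y_\ell)$ together with \eqref{l2}; the right-hand sides are the $(k+2,s)$ entries of the matrix $T$ in \eqref{T2}. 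The rational terms $\tfrac{8}{5}\lambda_4,\tfrac{2}{5}\lambda_6,\tfrac{1}{5}\lambda_8$ in $\mathcal{L}_k(x_\ell)$, $\mathcal{L}_k(y_\ell)$ are inserted precisely to produce this match.

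For independence, I form the $6\times 6$ matrix $\mathcal{T}$ whose rows are $\mathcal{L}_0,\widehat{\mathcal{L}}_2,\widehat{\mathcal{L}}_4,\widehat{\mathcal{L}}_6,\mathcal{L}_1,\mathcal{L}_3$ and whose columns are the partials with respect to $x_2,x_3,x_4,y_4,y_5,y_6$. Row operations subtracting the $\mathcal{L}_1,\mathcal{L}_3$ parts out of the $\widehat{\mathcal{L}}_k$ rows preserve $\det\mathcal{T}$; changing column coordinates from $(x_2,x_3,x_4,y_4,y_5,y_6)$ to $(\lambda_4,\lambda_6,\lambda_8,\lambda_{10},x_3,y_5)$ via \eqref{l2} then makes $\mathcal{T}$ block triangular. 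By the projectability step the upper $4\times 4$ block becomes the matrix $T$ of \eqref{T2}, and the lower $2\times 2$ block is the Jacobian of $(\mathcal{L}_1,\mathcal{L}_3)$ on $(x_3,y_5)$, which is computed directly to be a nonzero polynomial. Since the Jacobian of the coordinate change is another nonzero polynomial, one obtains $\det\mathcal{T}=c\,\det T$ with $c\neq 0$, and Lemma~\ref{l1} then gives independence on $p^{-1}(\mathcal{B})$, parallel to the genus~$1$ determinant calculation of Section~\ref{s7}.

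For the Lie algebra, each bracket $[\mathcal{V}_i,\mathcal{V}_j]$ among the basis fields is polynomial, and since $p_*$ commutes with brackets, $p_*[\mathcal{V}_i,\mathcal{V}_j]=[L_i,L_j]$ is known by direct computation with \eqref{Lk} (consistent with the $L_k$-parts of the formulas displayed in Section~\ref{ss2}). This determines the coefficients of $\mathcal{L}_0,\widehat{\mathcal{L}}_2,\widehat{\mathcal{L}}_4,\widehat{\mathcal{L}}_6$ in each commutator, and the remaining coefficients on the vertical fields $\mathcal{L}_1,\mathcal{L}_3$ are pinned down by evaluating both sides on two well-chosen coordinates, say $x_2$ and $y_4$. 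The main obstacle is computational bulk---fifteen brackets to expand and match---but the conceptual crux is the determinant identity of step (iii), which reduces independence on $\mathbb{C}^{6}$ to Lemma~\ref{l1}. The pragmatic subtlety is that the free constants $\alpha,\beta,\gamma_1,\gamma_2$ in $\widehat{\mathcal{L}}_k$ must be fixed so that the $\mathcal{L}_1,\mathcal{L}_3$ coefficients of every bracket are polynomial of the correct grading weight; consistency with the genus~$2$ answer recalled in Section~\ref{ss2} confirms the admissible choices.
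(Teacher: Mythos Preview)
Your projectability argument is correct and matches the paper's: reduce to $\alpha=\beta=\gamma_1=\gamma_2=0$ since $\widehat{\mathcal{L}}_k-\mathcal{L}_k$ is a combination of $\mathcal{L}_1,\mathcal{L}_3$, which annihilate $p^*\mathcal{P}$, and then verify $\mathcal{L}_k(p^*\lambda_s)=p^*L_k(\lambda_s)$ on the generators by direct substitution.

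Your independence argument, however, has a genuine gap. The block-triangular reduction gives the identity
\[
\det\mathcal{T}\cdot\det J \;=\; p^*(\det T)\cdot\det B,
\]
where $J$ is the Jacobian of your coordinate change and $B$ is the lower $2\times 2$ block. From this you write $\det\mathcal{T}=c\,\det T$ with $c\neq 0$, but nothing in the argument forces $\det B/\det J$ to be a \emph{constant} rather than merely a nonzero rational function of weight zero (such as $x_4/y_4$). If the factor were nonconstant it could vanish on part of $p^{-1}(\mathcal{B})$, and independence would fail there. The paper closes this gap by direct computation: $\det\mathcal{T}=-16\det T$. Your block-triangular decomposition is a perfectly good way to \emph{organize} that computation, but you still owe the verification that $\det B/\det J=-16$; the structural argument alone does not supply it.

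A second point: your final paragraph asserts that the constants $\alpha,\beta,\gamma_1,\gamma_2$ ``must be fixed so that the $\mathcal{L}_1,\mathcal{L}_3$ coefficients of every bracket are polynomial of the correct grading weight''. This is a misunderstanding. Brackets of polynomial vector fields are automatically polynomial, and the grading is automatically respected, so your stated condition is vacuous. The lemma holds for \emph{all} values of $\alpha,\beta,\gamma_1,\gamma_2$; the paper's Remark immediately following the Lie algebra says this explicitly. Fixing the constants is only used afterwards to single out one preferred algebra in the family, not to make the lemma true.
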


\begin{proof}
We have $\mathcal{L}_1(p^*(f)) = \mathcal{L}_3(p^*(f)) = 0$ for any $f \in \mathcal{P}$. So for the proof it is sufficient to check
the condition 
$
\widehat{\mathcal{L}}_k(p^*(f)) = p^*(L_k(f))
$ for the generators $f = \lambda_4, \lambda_6, \lambda_8, \lambda_{10}$ of $\mathcal{P}$
in the case $\alpha = \beta = \gamma_1 = \gamma_2 = 0$. This is a straightforward calculation using the explicit polynomial vector fields
$\mathcal{L}_2$, $\mathcal{L}_4$, $\mathcal{L}_6$ and $L_2$, $L_4$, $L_6$ (see Section \ref{s4})
and their action on the polynomials~\eqref{l2}.

We have
$\det \mathcal{T} = - 16 \det T$ for $T$ determined by \eqref{T2}.
Therefore, the vector fields $\widehat{\mathcal{L}}_k$ are independent at any point of $p^{-1}(\mathcal{B})$.
\end{proof}

The polynomial Lie algebra is
\begin{align*}
[\mathcal{L}_0, \mathcal{L}_k] &= k \mathcal{L}_k, \qquad \qquad \qquad [\mathcal{L}_0, \widehat{\mathcal{L}}_k] = k \widehat{\mathcal{L}}_k,
\qquad \qquad \qquad [\mathcal{L}_1, \mathcal{L}_3] = 0, \\
[\mathcal{L}_1, \widehat{\mathcal{L}}_2] &= x_2 \mathcal{L}_1 - \mathcal{L}_3, \qquad \quad \;
[\mathcal{L}_1, \widehat{\mathcal{L}}_4] = y_4 \mathcal{L}_1 + x_2 \mathcal{L}_3 + \alpha x_4 \mathcal{L}_1, \\
[\mathcal{L}_1, \widehat{\mathcal{L}}_6] &= y_4 \mathcal{L}_3 + \left(\gamma_2 (x_3^2 + x_2 x_4) + \gamma_1 y_6\right) \mathcal{L}_1 +
\beta x_4 \mathcal{L}_3,\\
\quad [\mathcal{L}_3, \widehat{\mathcal{L}}_2] &= \left(y_4 + {4 \over 5} \lambda_{4} \right) \mathcal{L}_1, \quad \,
[\mathcal{L}_3, \widehat{\mathcal{L}}_4] = \left(w_6 + {6 \over 5} \lambda_{6}\right) \mathcal{L}_1
+ (y_4 - \lambda_{4}) \mathcal{L}_3 + \alpha y_6 \mathcal{L}_1,\\
[\mathcal{L}_3, \widehat{\mathcal{L}}_6] &=
{3 \over 5} \lambda_8 \mathcal{L}_1 + w_6 \mathcal{L}_3 + \left(\gamma_1 x_4 y_4 + \gamma_2 x_3 y_5 - (\gamma_1 - \gamma_2) x_2 y_6 \right) \mathcal{L}_1 + \beta  y_6 \mathcal{L}_3 ,\\
[\widehat{\mathcal{L}}_2, \widehat{\mathcal{L}}_4] &= {8 \over 5} \lambda_{6} \mathcal{L}_0 - {8 \over 5} \lambda_{4} \widehat{\mathcal{L}}_2
  + 2 \widehat{\mathcal{L}}_6 - {1 \over 2} y_5 \mathcal{L}_1 + {1 \over 2} x_3 \mathcal{L}_3 + \\
& \quad + \left(2 \left(\alpha - \gamma_2\right) x_2 x_3 + \left(5 \alpha - 2 \gamma_1\right) y_5 \right) \mathcal{L}_1 + \left(\alpha - 2 \beta\right) x_3 \mathcal{L}_3,
\\
[\widehat{\mathcal{L}}_2, \widehat{\mathcal{L}}_6] &= {4 \over 5} \lambda_{8} \mathcal{L}_0 - {4 \over 5} \lambda_{4} \widehat{\mathcal{L}}_4 - {1 \over 2} p_7 \mathcal{L}_1 + {1 \over 2} y_5  \mathcal{L}_3 +
 \\ & \quad + 
{1 \over 5} \left( 2 \left(\alpha - \beta  - \gamma_1 \right) \left(x_4 - 6 x_2^2\right)
+ 4 \gamma_2 (x_4 - x_2^2 + y_4) - \left(8 \alpha - 3 \beta - 23 \gamma_1 \right) y_4 \right) x_3 \mathcal{L}_1 - \\
& \quad - \left(\gamma_1 - 5 \gamma_2\right) x_2 y_5 \mathcal{L}_1 + \left(\left(5 \beta + \gamma_1 \right) y_5 + \left(3 \beta + \gamma_2\right) x_2 x_3\right) \mathcal{L}_3,\\
[\widehat{\mathcal{L}}_4, \widehat{\mathcal{L}}_6] &= - 2 \lambda_{10} \mathcal{L}_0 + {6 \over 5} \lambda_8 \widehat{\mathcal{L}}_2
- {6 \over 5} \lambda_6 \widehat{\mathcal{L}}_4 + 2 \lambda_4 \widehat{\mathcal{L}}_6 - {1 \over 2} w_9 \mathcal{L}_1 + {1 \over 2} p_7 \mathcal{L}_3 + \\
& \quad + (\alpha - \beta - \gamma_1 + 2 \gamma_2) \left({6 \over 5} \lambda_{6} + w_6 + y_6\right) x_3 \mathcal{L}_1 + \\
& \quad + \left( 2 \gamma_2 x_2^3 - {1 \over 2} \gamma_2 x_3^2 + (6 \gamma_1 - 7 \alpha) x_2 y_4 + (\beta - \gamma_2) y_6 \right) x_3 \mathcal{L}_1 + \\
& \quad + \left( (4 \alpha - 3 \gamma_1 + 5 \gamma_2) x_2^2 - {1 \over 2} (\alpha - \gamma_1) x_4 + (\alpha + 2 \gamma_1) y_4 \right) y_5 \mathcal{L}_1 + \\
& \quad + \alpha \left( \gamma_2 x_3^3 - \gamma_1 x_4 y_5 - (\beta - \gamma_1) x_3 y_6\right) \mathcal{L}_1 + \\
& \quad +
\left((3 \beta - \gamma_2) x_2^2 x_3 + {1 \over 2} \beta (2 \alpha - 1) x_3 x_4 + (\alpha + 2 \beta) x_3 y_4 + (5 \beta - \gamma_1) x_2 y_5
\right) \mathcal{L}_3. 
\end{align*}
\vspace{-5mm}

\begin{rem}
We give here the full form of the polynomial Lie algebra for arbitrary constants $\alpha, \beta, \gamma_1, \gamma_2$, 
as for any of these constants the polynomial vector fields satisfy Problem \ref{p6} in the case $g = 2$ and thus give a solution to Problem \ref{p1}.
To specify these constants some complimentary condition is needed.
In \cite{BM} the condition that the vector fields form a Witt algebra is taken.
In the current work 
to specify one Lie algebra in this space of parameters we use the following condition on the polynomial Lie algebra: \vspace{-1mm}
\[
 \begin{pmatrix}
 [\mathcal{L}_1, \mathcal{L}_0] \\
 [\mathcal{L}_1, \mathcal{L}_2] \\
 [\mathcal{L}_1, \mathcal{L}_4] \\
 [\mathcal{L}_1, \mathcal{L}_6] \\
 \end{pmatrix}
= 
 \begin{pmatrix}
 -1 & 0 \\
 x_2 & -1 \\
 y_4 & x_2 \\
 0 & y_4 \\
 \end{pmatrix}
 \begin{pmatrix}
  \mathcal{L}_1 \\ \mathcal{L}_3
 \end{pmatrix}.
\]
\vspace{-2mm}

Let us note that this condition leads to the polynomial Lie algebra obtained in \cite{B} and the solution to Problem \ref{p1} described in Section \ref{ss2}.
\end{rem}

\section{Genus 3} \label{s9}

In this section we give polynomial vector fields satisfying Problem \ref{p6} in the case $g = 3$.

Analogously to the genus $g = 2$ case, the vector fields are determined ambiguously up to some parameters. 
This parameters are determined using the condition \eqref{com1}.

In the formulas below we write $y_4$ instead of $x_{1,3}$, $y_5$ instead of~$x_{2,3}$, $y_6$ instead of~$x_{3,3}$,
$z_6$ instead of~$x_{1,5}$, $z_7$ instead of~$x_{2,5}$, $z_8$ instead of $x_{3,5}$,
$w_6$ instead of~$w_{3,3}$, $w_8$ instead of~$x_{3,5}$, and $w_{10}$ instead of~$w_{5,5}$
to shorten down the formulas.

The map $p$ takes the form
\begin{align*}
\lambda_{4} &= - 3 x_2^2 + {1 \over 2} x_4 - 2 y_4, \qquad
\lambda_{6} = 2 x_2^3 + {1 \over 4} x_3^2 - {1 \over 2} x_2 x_4 - 2 x_2 y_4 + {1 \over 2} y_6 - 2 z_6, \\
\lambda_{8} &= 4 x_2^2 y_4 - {1 \over 2} (x_4 y_4 - x_3 y_5 + x_2 y_6) + y_4^2 - 2 x_{2} z_6 + {1 \over 2} z_8, \\
\lambda_{10} &= 2 x_2 y_4^2 + {1 \over 4} y_5^2 - {1 \over 2} y_4 y_6 - \frac{1}{2} (x_4 z_6 - x_3 z_7 + x_2 z_8) + \left(4 x_2^2 + 2 y_4\right) z_6, \\
\lambda_{12} &= 4 x_2 y_4 z_6 - {1 \over 2} (y_6 z_6 - y_5 z_7 + y_4 z_8) + z_6^2, \qquad
\lambda_{14} = 2 x_2 z_6^2 + {1 \over 4} z_7^2 - {1 \over 2} z_6 z_8.
\end{align*}

By \eqref{L0} we have
\begin{align*}
\mathcal{L}_0 &= \sum_{k = 2,3,4} k x_{k} {\partial \over \partial x_{k}} + \sum_{k = 4,5,6} k y_{k} {\partial \over \partial y_{k}} +
\sum_{k = 6,7,8} k z_{k} {\partial \over \partial z_{k}}.
\end{align*}
It is Euler vector field, and for all $k$ we will have 
\[
 [\mathcal{L}_0, \mathcal{L}_k] = k \mathcal{L}_k.
\]
By \eqref{L1} we have
\begin{multline*}
\mathcal{L}_1 = 
x_3 {\partial \over \partial x_2} + x_4 {\partial \over \partial x_3}
+ 4 (3 x_2 x_3 + y_5) {\partial \over \partial x_4} + \\
+ y_5 {\partial \over \partial y_4} + y_6 {\partial \over \partial y_5}
+ 4 (x_3 y_4 + 2 x_2 y_5 + z_7) {\partial \over \partial y_6} + \\
 + z_7 {\partial \over \partial z_6} + z_8 {\partial \over \partial z_7}
+ 4 (x_3 z_6 + 2 x_2 z_7) {\partial \over \partial z_8}.
\end{multline*}
According to definition and Corollary \ref{cLs}, we have
 \[
[\mathcal{L}_1, \mathcal{L}_3] = 0, \quad [\mathcal{L}_1, \mathcal{L}_5] = 0, \quad [\mathcal{L}_3, \mathcal{L}_5] = 0,
 \]
and
the vector fields $\mathcal{L}_3$ and $\mathcal{L}_5$ are determined by
\begin{align*}
\mathcal{L}_3(x_2) &= y_5, & \mathcal{L}_3(y_4) &= x_3 y_4 - x_2 y_5 + z_7, & \mathcal{L}_3(z_6) &= x_3 z_6 - x_2 z_7,\\
\mathcal{L}_5(x_2) &= z_7, & \mathcal{L}_5(y_4) &= x_3 z_6 - x_2 z_7, & \mathcal{L}_5(z_6) &= y_5 z_6 - y_4 z_7.
\end{align*}

By \eqref{y3}, \eqref{y5} we have
\begin{align*}
w_6 &= 3 x_2 y_4 - \frac{1}{2} y_6 + 3 z_6, \qquad 
w_8 = 3 x_2 z_6 - \frac{1}{2} z_8, \\ 
w_{10} &= \frac{1}{2} (x_4 z_6 - x_3 z_7 + x_2 z_8) - \left(4 x_2^2 + y_4\right) z_6.
\end{align*}

Denote
$p_7 = \mathcal{L}_1(w_6) = \mathcal{L}_3(y_4)$, $p_9 = \mathcal{L}_1(w_8) = \mathcal{L}_3(z_6) = \mathcal{L}_5(y_4)$,
\mbox{$p_{11} = \mathcal{L}_1(w_{10}) = \mathcal{L}_5(z_6)$,}
$w_9 = \mathcal{L}_3(w_6)$, $w_{11} = \mathcal{L}_5(w_6) = \mathcal{L}_3(w_8)$, $w_{13}  = \mathcal{L}_5(w_8) = \mathcal{L}_3(w_{10})$,
$w_{15} = \mathcal{L}_5(w_{10})$. We have
\begin{align*}
p_7 &= x_3 y_4 - x_2 y_5 + z_7, & p_9 &= x_3 z_6 - x_2 z_7, & p_{11} &= y_5 z_6 - y_4 z_7, 
\end{align*}
\begin{align*}
w_9 &= - x_3 x_2 y_4 + x_2^2 y_5 - {1 \over 2} x_4 y_5 + {1 \over 2} x_3 y_6 + y_4 y_5 + x_3 z_6 - 2 x_2 z_7,\\
w_{11} &= - x_2 x_3 z_6 + y_5 z_6 + x_2^2 z_7 - {1 \over 2} x_4 z_7 + {1 \over 2} x_3 z_8, \\
w_{13} &= - y_5 x_2 z_6 + x_2 y_4 z_7 - {1 \over 2} y_6 z_7 + {1 \over 2} y_5 z_8 + z_6 z_7, \\
w_{15} &=  - y_4 y_5 z_6 + y_4^2 z_7 + x_3 z_6^2 - x_2 z_6 z_7 - {1 \over 2} z_7 z_8 + {1 \over 2} z_8 p_7 - {1 \over 2} y_6 p_9 + {1 \over 2} x_4 p_{11}.
\end{align*}

Now for $\mathcal{L}_0, \mathcal{L}_2, \mathcal{L}_4, \mathcal{L}_6, \mathcal{L}_8, \mathcal{L}_{10}$ we give their commutators with $\mathcal{L}_1$:
\begin{equation} \label{com1}
  \begin{pmatrix}
 [\mathcal{L}_1, \mathcal{L}_0] \\
 [\mathcal{L}_1, \mathcal{L}_2] \\
 [\mathcal{L}_1, \mathcal{L}_4] \\
 [\mathcal{L}_1, \mathcal{L}_6] \\
 [\mathcal{L}_1, \mathcal{L}_8] \\
 [\mathcal{L}_1, \mathcal{L}_{10}] 
 \end{pmatrix}
= 
 \begin{pmatrix}
 -1 & 0 & 0\\
 x_2 & -1 & 0\\
 y_4 & x_2 & -1 \\
 z_6 & y_4 & x_2 \\
 0 & z_6 & y_4\\
 0 & 0 & z_6
 \end{pmatrix}
 \begin{pmatrix}
  \mathcal{L}_1 \\ \mathcal{L}_3\\ \mathcal{L}_5
 \end{pmatrix}.
\end{equation}
As the right hand sides of the commutators have already been defined, these relations determine
$\mathcal{L}_2, \mathcal{L}_4, \mathcal{L}_6, \mathcal{L}_8, \mathcal{L}_{10}$ given their values on $x_2, y_4, z_6$.

Let us give these values:
\begin{align*}
\mathcal{L}_2(x_2) &= {12 \over 7} \lambda_{4} + 2 x_2^2 + 4 y_4, \quad
\mathcal{L}_2(y_4) = - {8 \over 7} \lambda_4 x_2 + 2 x_2 y_4 + 6 z_6, \quad
\mathcal{L}_2(z_6) = - {4 \over 7} \lambda_4 y_4 + 2 x_2 z_6, \\
\mathcal{L}_4(x_2) &= {4 \over 7} \lambda_6 - 2 x_2 y_4 + y_6 + 2 z_6, \qquad \mathcal{L}_4(z_6) = - {6 \over 7} \lambda_6 y_4 - 16 x_2^2 z_6  + 3 x_4 z_6 - 8 y_4 z_6 - {1 \over 2} x_3 z_7, \\
\mathcal{L}_4(y_4) &= - {12 \over 7} \lambda_6 x_2 - 10 x_2^2 y_4 + 2 x_4 y_4 - 4 y_4^2 - {1 \over 2} x_3 y_5 + 2 x_2 z_6, \\
\mathcal{L}_6(x_2) &= - {4 \over 7} \lambda_8 + 4 x_2^2 y_4 - x_2 y_6 - 4 x_2 z_6 + {1 \over 2} x_3 y_5 + 2 z_8, \\
\mathcal{L}_6(y_4) &= - {16 \over 7} \lambda_8 x_2 + 2 \lambda_6 y_4 - 2 x_2 y_4^2 - y_5^2 + y_4 y_6 - 16 x_2^2 z_6 + 3 x_4 z_6 - 6 y_4 z_6 - {1 \over 2} x_3 z_7, \\
\mathcal{L}_6(z_6) &= - {8 \over 7} \lambda_8 y_4 + 4 \lambda_6 z_6 - 2 x_2 y_4 z_6 + y_6 z_6 - y_5 z_7 + 2 z_6^2, \\
\mathcal{L}_8(x_2) &= {2 \over 7} \lambda_{10} + x_4 z_6 - 2 y_4 z_6 - x_2 z_8, \\
\mathcal{L}_8(y_4) &= - {6 \over 7} \lambda_{10} x_2 + x_3^2 z_6 - x_2 x_4 z_6 - 18 x_2 y_4 z_6 + 3 y_6 z_6 - x_2 x_3 z_7 - {5 \over 2} y_5 z_7 + x_2^2 z_8 + 2 y_4 z_8 - 6 z_6^2, \\
\mathcal{L}_8(z_6) &= - {10 \over 7} \lambda_{10} y_4 - x_4 y_4 z_6 + {3 \over 2} x_3 y_5 z_6 
+ 2 y_4^2 z_6 - 20 x_2 z_6^2 - z_7^2 + 3 z_6 z_8 + 4 w_6 x_2 z_6 - {1 \over 2} z_7 p_7, \\
\mathcal{L}_{10}(x_2) &= {1 \over 7} \lambda_{12} + {1 \over 2} y_6 z_6 - {1 \over 2} y_4 z_8 - z_6^2, \\
\mathcal{L}_{10}(y_4) &= - {3 \over 7} \lambda_{12} x_2 + {1 \over 2} x_3 y_5 z_6 - {1 \over 2} x_2 y_6 z_6 - {1 \over 2} x_3 y_4 z_7 + {1 \over 2} x_2 y_4 z_8
- 11 x_2 z_6^2 - {3 \over 2} z_7^2 + 3 z_6 z_8, \\
\mathcal{L}_{10}(z_6) &= {2 \over 7} \lambda_{12} y_4 - 4 x_2 y_4^2 z_6 + {1 \over 2} y_5^2 z_6 - y_4 y_5 z_7 + y_4^2 z_8 + 8 x_2^2 z_6^2 + x_2 z_7^2 - 2 x_2 z_6 z_8.
\end{align*}

\begin{lem} \label{l9}
For the polynomial vector fields $\mathcal{L}_0, \mathcal{L}_1, \mathcal{L}_2, \mathcal{L}_3, \mathcal{L}_4, \mathcal{L}_6, \mathcal{L}_5, \mathcal{L}_8, \mathcal{L}_{10}$ 
defined in this section for $k = 4,6,8,10,12,14$ we have
\begin{align*}
 \mathcal{L}_0(\lambda_k) &= L_0(\lambda_k), &
 \mathcal{L}_1(\lambda_k) &= 0, &
 \mathcal{L}_2(\lambda_k) &= L_2(\lambda_k), \\
 \mathcal{L}_4(\lambda_k) &= L_4(\lambda_k),&
 \mathcal{L}_3(\lambda_k) &= 0, &
 \mathcal{L}_6(\lambda_k) &= L_6(\lambda_k),\\
 \mathcal{L}_8(\lambda_k) &= L_8(\lambda_k), &
 \mathcal{L}_5(\lambda_k) &= 0, &
 \mathcal{L}_{10}(\lambda_k) &= L_{10}(\lambda_k).
\end{align*}
\end{lem}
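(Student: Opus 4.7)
The plan is to verify the lemma as a collection of explicit polynomial identities in the coordinate ring $\mathbb{C}[x_2,x_3,x_4,y_4,y_5,y_6,z_6,z_7,z_8]$. Each $p^*\lambda_k$ is given by the concrete polynomial expression at the beginning of Section \ref{s9}, and each vector field $\mathcal{L}_m$ is either specified on all nine coordinates directly (for $m\in\{0,1,3,5\}$) or determined by the values on $x_2$, $y_4$, $z_6$ listed in Section \ref{s9} together with the commutation relations \eqref{com1} (for $m\in\{2,4,6,8,10\}$). So the proof reduces to a finite polynomial check, in the same spirit as the $g=1,2$ verifications in Sections \ref{s7}--\ref{s8}.

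I would first dispose of the easy rows. The identity $\mathcal{L}_0(p^*\lambda_k)=k\,p^*\lambda_k=p^*L_0(\lambda_k)$ follows at once from the fact that $p^*\lambda_k$ is homogeneous of weight $k$ in the grading given by variable subscripts, and $\mathcal{L}_0$ is the Euler vector field. For the odd fields $\mathcal{L}_1,\mathcal{L}_3,\mathcal{L}_5$, which are fully and explicitly given on all nine coordinates, the identity $\mathcal{L}_j(p^*\lambda_k)=0$ amounts to $18$ direct substitutions and simplifications, each of moderate size.

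For the five even vector fields I would first derive their action on the six remaining coordinates $x_3,x_4,y_5,y_6,z_7,z_8$ using \eqref{com1}. Writing, for instance, $[\mathcal{L}_1,\mathcal{L}_2]=x_2\mathcal{L}_1-\mathcal{L}_3$ and applying both sides to $x_2$ gives $\mathcal{L}_2(x_3)=\mathcal{L}_1(\mathcal{L}_2(x_2))-x_2 x_3+y_5$; iterating with $\mathcal{L}_1$ applied to $x_3,y_4,y_5,z_6,z_7$ determines in turn $\mathcal{L}_2(x_4),\mathcal{L}_2(y_5),\mathcal{L}_2(y_6),\mathcal{L}_2(z_7),\mathcal{L}_2(z_8)$, and the same scheme handles $\mathcal{L}_4,\mathcal{L}_6,\mathcal{L}_8,\mathcal{L}_{10}$. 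With all these values in hand, I substitute into $p^*\lambda_j$ for $j=4,6,8,10,12,14$ and compare monomial by monomial with the polynomial $L_{2k}(\lambda_j)$ read off from the appropriate row of the matrix $T$ in \eqref{T3}.

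The main obstacle is purely computational: the even case yields $30$ identities between polynomials of substantial weight in nine variables, and the intermediate expressions for $\mathcal{L}_{2k}$ on the six derived coordinates are lengthy. Since every identity is homogeneous of a known weight, one keeps bookkeeping tractable by sorting monomials by total weight, and the verification is naturally delegated to a computer algebra system. No new ideas are required beyond those already exploited for $g=1,2$; indeed, the specific values of $\mathcal{L}_{2k}(x_2),\mathcal{L}_{2k}(y_4),\mathcal{L}_{2k}(z_6)$ recorded in Section \ref{s9} were engineered precisely so that, after extension through \eqref{com1}, the identities of the lemma hold.
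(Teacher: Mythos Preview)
Your proposal is correct and takes essentially the same approach as the paper: the paper's own proof is the single sentence that this is ``a straightforward calculation obtained by applying the polynomial vector fields $\mathcal{L}_s$ to the polynomials $\lambda_k$'', and you have simply spelled out in more detail how that calculation is organized (handling $\mathcal{L}_0$ by homogeneity, the odd fields by direct substitution, and the even fields by first extending via \eqref{com1}).
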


\begin{proof}
 The proof is a straightforward calculation obtained by applying the polynomial vector fields $\mathcal{L}_s$ to the polynomials $\lambda_k$.
\end{proof}

\begin{lem}
 The vector fields $\mathcal{L}_0, \mathcal{L}_1, \mathcal{L}_2, \mathcal{L}_3, \mathcal{L}_4, \mathcal{L}_6, \mathcal{L}_5, \mathcal{L}_8, \mathcal{L}_{10}$ solve Problem \ref{p6}.
\end{lem}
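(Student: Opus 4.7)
My plan is to verify the three requirements of Problem \ref{p6} in turn: that the nine vector fields are polynomial, that they are projectable under $p$, and that they are linearly independent at every point of $p^{-1}(\mathcal{B})$; the polynomial Lie algebra can then be read off from their commutators. Polynomiality of $\mathcal{L}_0, \mathcal{L}_1, \mathcal{L}_3, \mathcal{L}_5$ is immediate from their explicit formulas. For each $\mathcal{L}_{2k}$, only the action on the three generators $x_2, y_4, z_6$ was written down, while the coefficients on the remaining six coordinates are determined by the commutation relations \eqref{com1}; since the right-hand sides of \eqref{com1} are polynomial in the generators, those coefficients are polynomial as well.

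For projectability, Lemma \ref{l9} already provides $\mathcal{L}_k(\lambda_s) = L_k(\lambda_s)$ for every generator $\lambda_s$ of the polynomial ring $\mathcal{P}$. Since each $\mathcal{L}_k$ is a derivation, this equality extends from the $\lambda_s$ to all polynomials in them, giving $\mathcal{L}_k(p^* f) = p^* L_k(f)$ for every $f \in \mathcal{P}$, which is the defining condition of projectability.

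For independence, I would form the $9 \times 9$ matrix $\mathcal{T}$ with entries $\mathcal{L}_k(x_{i,j})$ and compute $\det \mathcal{T}$. The pattern from Sections \ref{s7} and \ref{s8}, where $\det \mathcal{T} = 4 \det T$ and $\det \mathcal{T} = -16 \det T$ respectively, suggests that here too $\det \mathcal{T}$ is a nonzero numerical multiple of $\det T$ from \eqref{T3}. Combined with $\det T = -\frac{64}{7} R(\lambda)$ from Lemma \ref{l1}, this immediately yields $\det \mathcal{T} \neq 0$ on $p^{-1}(\mathcal{B}) = \{R \circ p \neq 0\}$. The main obstacle is evaluating a $9 \times 9$ determinant; I would organize the computation by Laplace expansion along the rows for $\mathcal{L}_1, \mathcal{L}_3, \mathcal{L}_5$, whose entries are comparatively simple, and identify the resulting $6 \times 6$ minors with variants of $T$ after the uniformization of Theorem \ref{thm3}.

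Finally, to exhibit the polynomial Lie algebra, note that the brackets among $\{\mathcal{L}_1, \mathcal{L}_3, \mathcal{L}_5\}$ vanish by construction and the brackets $[\mathcal{L}_1, \mathcal{L}_{2k}]$ are prescribed by \eqref{com1}. The brackets $[\mathcal{L}_{2m+1}, \mathcal{L}_{2k}]$ with $m \in \{1,2\}$ follow from \eqref{com1} via the Jacobi identity applied to the triple $(\mathcal{L}_1, \mathcal{L}_{2m+1}, \mathcal{L}_{2k})$, using $[\mathcal{L}_1, \mathcal{L}_{2m+1}] = 0$. For each remaining pair, $[\mathcal{L}_{2k}, \mathcal{L}_{2m}]$ projects under $p$ to $[L_{2k}, L_{2m}]$, which is given by \eqref{M}; subtracting the corresponding combination of $\mathcal{L}_{2s}$ produces a vector field vertical for $p$, hence a $\mathcal{P}$-linear combination of $\mathcal{L}_1, \mathcal{L}_3, \mathcal{L}_5$, whose coefficients are determined by evaluation on $x_2, y_4, z_6$. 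Polynomiality of those coefficients follows by the grading and by the explicit polynomial action of each $\mathcal{L}_{2k}$ on the three basic generators, and this closure check is where the bulk of the computational effort lies.
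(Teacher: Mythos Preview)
Your approach matches the paper's: projectability is deduced from Lemma~\ref{l9}, and independence on $p^{-1}(\mathcal{B})$ from the relation $\det\mathcal{T}=-64\det T$ (the paper simply states this numerical factor, confirming your pattern $4,\,-16,\,-64$). One small correction: in your Lie-algebra discussion you say the vertical remainder of $[\mathcal{L}_{2k},\mathcal{L}_{2m}]$ is a $\mathcal{P}$-linear combination of $\mathcal{L}_1,\mathcal{L}_3,\mathcal{L}_5$, but $\mathcal{P}$ denotes polynomials in~$\lambda$; the coefficients that actually appear (e.g.\ $y_5$, $p_7$, $w_9$) are polynomials in the $x$-coordinates, not in $\lambda$ alone. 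This does not affect the lemma itself, since the paper records the Lie-algebra computation separately after the proof.
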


\begin{proof}
 By Lemma \ref{l9} the vector fileds $\mathcal{L}_k$ are projectable for $p$. 
 
We have
$\det \mathcal{T} = - 64 \det T$ for $T$ determined by \eqref{T3}. Therefore, the vector fields $\mathcal{L}_k$ are independent at any point of $p^{-1}(\mathcal{B})$.
\end{proof}

Now let us describe the polynomial Lie algebra for the vector fields
$\mathcal{L}_k$.
The commutators $[\mathcal{L}_0, \mathcal{L}_k]$, $[\mathcal{L}_1, \mathcal{L}_k]$ and $[\mathcal{L}_3, \mathcal{L}_5]$ have been given above.

The commutators of $\mathcal{L}_3$ with $\mathcal{L}_2, \mathcal{L}_4, \mathcal{L}_6, \mathcal{L}_8, \mathcal{L}_{10}$ are
\begin{equation} \label{com3}
  \begin{pmatrix}
 [\mathcal{L}_3, \mathcal{L}_2] \\
 [\mathcal{L}_3, \mathcal{L}_4] \\
 [\mathcal{L}_3, \mathcal{L}_6] \\
 [\mathcal{L}_3, \mathcal{L}_8] \\
 [\mathcal{L}_3, \mathcal{L}_{10}] 
 \end{pmatrix}
= 
 \begin{pmatrix}
 y_4 - \lambda_4 & 0 & - 3\\
 w_6 & y_4 - \lambda_4 & 0 \\
 w_8 & w_6 & y_4 - \lambda_4 \\
 0 & w_8 & w_6\\
 0 & 0 & w_8
 \end{pmatrix}
 \begin{pmatrix}
  \mathcal{L}_1 \\ \mathcal{L}_3\\ \mathcal{L}_5
 \end{pmatrix} +
 {3 \over 7}
 \begin{pmatrix}
 5 \lambda_4 \\
 4 \lambda_6 \\
 3 \lambda_8 \\
 2 \lambda_{10} \\
 \lambda_{12}
 \end{pmatrix}
 \mathcal{L}_1.
\end{equation}
The commutators of $\mathcal{L}_5$ with $\mathcal{L}_2, \mathcal{L}_4, \mathcal{L}_6, \mathcal{L}_8, \mathcal{L}_{10}$ are
\begin{equation} \label{com5}
  \begin{pmatrix}
 [\mathcal{L}_5, \mathcal{L}_2] \\
 [\mathcal{L}_5, \mathcal{L}_4] \\
 [\mathcal{L}_5, \mathcal{L}_6] \\
 [\mathcal{L}_5, \mathcal{L}_8] \\
 [\mathcal{L}_5, \mathcal{L}_{10}] 
 \end{pmatrix}
= 
 \begin{pmatrix}
 z_6 & 0 & 0\\
 w_8 & z_6 & 0 \\
 w_{10} & w_8 & z_6 \\
 - \lambda_{12} & w_{10} & w_8\\
 -2 \lambda_{14} & - \lambda_{12} & w_{10}
 \end{pmatrix}
 \begin{pmatrix}
  \mathcal{L}_1 \\ \mathcal{L}_3\\ \mathcal{L}_5
 \end{pmatrix} +
 {2 \over 7}
 \begin{pmatrix}
 2 \lambda_{4} \\
 3 \lambda_6 \\
 4 \lambda_8 \\
 5 \lambda_{10} \\
 6 \lambda_{12} 
 \end{pmatrix}
 \mathcal{L}_3
 - \begin{pmatrix}
 0 \\
 3 \lambda_4 \\
 2 \lambda_6 \\
 \lambda_8 \\
 0
 \end{pmatrix}
 \mathcal{L}_5.
\end{equation}
The remaining commutators are
\[
 \begin{pmatrix}
 [\mathcal{L}_2, \mathcal{L}_4] \\
 [\mathcal{L}_2, \mathcal{L}_6] \\
 [\mathcal{L}_2, \mathcal{L}_8] \\
 [\mathcal{L}_2, \mathcal{L}_{10}] \\
 [\mathcal{L}_4, \mathcal{L}_6] \\
 [\mathcal{L}_4, \mathcal{L}_8] \\
 [\mathcal{L}_4, \mathcal{L}_{10}] \\
 [\mathcal{L}_6, \mathcal{L}_8] \\
 [\mathcal{L}_6, \mathcal{L}_{10}] \\
 [\mathcal{L}_8, \mathcal{L}_{10}] 
 \end{pmatrix}
=
\mathcal{M}
 \begin{pmatrix}
 \mathcal{L}_0 \\
 \mathcal{L}_2 \\
 \mathcal{L}_4 \\
 \mathcal{L}_6 \\
 \mathcal{L}_8 \\
 \mathcal{L}_{10}
 \end{pmatrix}
+ {1 \over 2}
 \begin{pmatrix}
- y_5 & x_3 & 0 \\
- p_7 - z_7 & y_5 & x_3 \\
- 2 p_9 & z_7 & y_5 \\
- p_{11} & 0 & z_7 \\
 - w_9 & p_7 - 2 z_7 & 2 y_5 \\
 - 2 w_{11} & 0 & 2 p_7 \\
 - w_{13} & - p_{11} & 2 p_9 \\
- 2 w_{13} & 2 p_{11} - w_{11} & w_9 \\
- w_{15} & - w_{13} & w_{11} + p_{11}\\
0 & - w_{15} & w_{13} 
 \end{pmatrix}
 \begin{pmatrix}
 \mathcal{L}_1 \\
 \mathcal{L}_3 \\
 \mathcal{L}_5 \\
 \end{pmatrix},
\]
where $\mathcal{M}$ is given by \eqref{M}.

\section{Representation of the generators in classic form} \label{s10}

\begin{thm} \label{tl}
For genus $g=3$ the generators of the $\mathcal{F}$-module $\Der \mathcal{F}$ are
\begin{align*}
\mathscr{L}_1 &= \partial_{u_1}, \qquad \mathscr{L}_3 = \partial_{u_3}, \qquad \mathscr{L}_5 = \partial_{u_5}, \\
\mathscr{L}_0 &= L_0 - u_1 \partial_{u_1} - 3 u_3 \partial_{u_3} - 5 u_5 \partial_{u_5}, \\
\mathscr{L}_2 &= L_2 - \left(\zeta_1 - {8 \over 7} \lambda_4 u_3 \right) \partial_{u_1}
- \left( u_1 - {4 \over 7} \lambda_4 u_5 \right) \partial_{u_3} - 3 u_3 \partial_{u_5}, \\
\mathscr{L}_4 &= L_4 - \left(\zeta_3 - {12 \over 7} \lambda_6 u_3 \right) \partial_{u_1}
- \left( \zeta_1 + \lambda_4 u_3 - {6 \over 7} \lambda_6 u_5 \right) \partial_{u_3} - (u_1 + 3 \lambda_4 u_5) \partial_{u_5}, \\
\mathscr{L}_6 &= L_6 - \left(\zeta_5 - {9 \over 7} \lambda_8 u_3 \right) \partial_{u_1}
- \left(\zeta_3 - {8 \over 7} \lambda_8 u_5 \right) \partial_{u_3}
- \left(\zeta_1 + \lambda_4 u_3 + 2 \lambda_6 u_5 \right) \partial_{u_5}, \\
\mathscr{L}_8 &= L_8 + \left({6 \over 7} \lambda_{10} u_3 - \lambda_{12} u_5\right) \partial_{u_1}
- \left(\zeta_5 - {10 \over 7} \lambda_{10} u_5 \right) \partial_{u_3}
- \left(\zeta_3 + \lambda_8 u_5 \right) \partial_{u_5}, \\
\mathscr{L}_{10} &= L_{10} + \left( {3 \over 7} \lambda_{12} u_3 - 2 \lambda_{14} u_5 \right) \partial_{u_1}
+ {5 \over 7} \lambda_{12} u_5 \partial_{u_3} - \zeta_5 \partial_{u_5}, 
\end{align*}
where the vector fields $L_{2k}$ are given explicitly by \eqref{Lk}.
\end{thm}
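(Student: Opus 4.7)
The plan is to verify that the vector fields $\mathscr{L}_k$ in the statement satisfy the two characterizing properties from Section \ref{s6}: each $\mathscr{L}_k$ is projectable for $\pi$ with pushforward $L_k$, and
\[
\mathscr{L}_k(\varphi^* x_{i,j}) = \varphi^* \mathcal{L}_k(x_{i,j})
\]
for every coordinate function $x_{i,j}$ of $\mathbb{C}^{3g}$. Once both hold, the conclusion $\mathscr{L}_k \in \Der \mathcal{F}$ is exactly the argument at the end of Section \ref{s6}, since the $\varphi^* x_{i,j}$ generate $\mathcal{F}$ and each $\mathcal{L}_k(x_{i,j})$ is a polynomial in the $x$-coordinates.

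Projectability with pushforward $L_k$ is immediate from the form of each $\mathscr{L}_k$: only the $L_k$ summand touches $\lambda$, and the other summands are vertical and vanish on $\pi^* \mathcal{P}$. For $\mathscr{L}_l = \partial_{u_l}$ with $l \in \{1,3,5\}$ the identity on $\varphi^* x_{i,j}$ is the defining relation for $\mathcal{L}_l$ in Section \ref{s6}. For $\mathscr{L}_0$ it is a weight count under the grading $\deg u_s = -s$, $\deg \lambda_s = s$, using that $\wp_{i;j}$ is homogeneous of weight $i + j = \deg x_{i,j}$.

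The substantial step is the identity for $k \in \{2,4,6,8,10\}$ on the generator $\wp_{1;j}$ with $j \in \{1,3,5\}$. For this I would invoke the genus-$3$ heat equations for the hyperelliptic sigma function: for each such $k$, a second-order linear differential operator $H_{2k}(u,\lambda,\partial_u)$ with polynomial coefficients satisfying $L_{2k}\sigma = H_{2k}\sigma$. Applying $-\partial_{u_1}\partial_{u_j}\log$ to this relation one obtains $L_{2k}(\wp_{1;j})$ as a polynomial in the $\wp_{i';j'}$'s plus finitely many correction terms linear in the $\zeta_l$ and $u_l$. These corrections are precisely compensated by the $\partial_{u_l}$ components of the stated $\mathscr{L}_k$ applied to $\wp_{1;j}$, and what remains agrees with $\varphi^*\mathcal{L}_k(x_{1,j})$ read off from Section \ref{s9}. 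The identities for $i = 2, 3$ then follow from the case $i = 1$ by applying $\partial_{u_1}$ and using the polynomial commutation relations \eqref{com1}, since Corollary \ref{cLs} gives $x_{2,j} = \mathcal{L}_1(x_{1,j})$ and $x_{3,j}$ is then determined via $\mathcal{L}_1$ as well.

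Independence of the nine vector fields on $\mathbb{C}^g \times \mathcal{B}$ follows from independence of $L_0, L_2, L_4, L_6, L_8, L_{10}$ on $\mathcal{B}$ (Lemma \ref{l1}) together with the manifest independence of the vertical fields $\partial_{u_1}, \partial_{u_3}, \partial_{u_5}$; by the discussion following Problem \ref{p2} they then give the $3g$ generators of the $\mathcal{F}$-module $\Der \mathcal{F}$. The main obstacle is the heat-equation step: writing out the genus-$3$ operators $H_{2k}$ explicitly and pushing the polynomial algebra through to match Section \ref{s9}, a tedious but entirely algorithmic computation once the $H_{2k}$ are in hand.
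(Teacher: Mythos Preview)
Your proposal is sound in principle, but it takes a noticeably different route from the paper. The paper does \emph{not} verify $\mathscr{L}_{2k}(\varphi^* x_{i,j}) = \varphi^*\mathcal{L}_{2k}(x_{i,j})$ by computing $L_{2k}(\wp_{1;j})$ via heat equations. Instead it argues as follows: write $\mathscr{L}_{2k} = L_{2k} + \sum_{l\in\{1,3,5\}} f_{2k,l}(u,\lambda)\,\partial_{u_l}$ and observe that, since $L_{2k}$ does not touch $u$, one has $[\partial_{u_l},\mathscr{L}_{2k}] = \sum_m (\partial_{u_l} f_{2k,m})\,\partial_{u_m}$. Requiring these commutators to equal the $\varphi^*$-pullback of the already-computed polynomial relations \eqref{com1}, \eqref{com3}, \eqref{com5} gives $\partial_{u_1} f_{2k,m}$, $\partial_{u_3} f_{2k,m}$, $\partial_{u_5} f_{2k,m}$ explicitly (as $\wp$-functions and $\lambda$'s), and integrating yields each $f_{2k,m}$ up to an additive constant in $u$. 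That constant is then fixed by the homogeneity condition $[\mathscr{L}_0,\mathscr{L}_{2k}] = 2k\,\mathscr{L}_{2k}$.

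The practical difference is that the paper's argument is entirely internal: it needs only the polynomial Lie algebra from Section~\ref{s9}, so no genus-$3$ heat operators $H_{2k}$ have to be written down or matched against $\mathcal{L}_{2k}(x_{1,j})$. Your approach would also work, but it imports a substantial external ingredient (the explicit $H_{2k}$ for $g=3$) and then requires the polynomial matching you flag as the main obstacle. What your approach buys is a more direct verification of $\mathscr{L}_{2k}\in\Der\mathcal{F}$, whereas the paper's determination-by-commutators tacitly relies on the existence of \emph{some} lift $\mathscr{L}_{2k}$ with the property $\mathscr{L}_{2k}\varphi^* x_{i,j}=\varphi^*\mathcal{L}_{2k}x_{i,j}$ (announced in Section~\ref{s6}) and then pins it down uniquely. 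Either route closes the argument; the paper's is shorter and self-contained given Section~\ref{s9}.
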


\begin{proof}
To obtain $\mathscr{L}_k f \in \mathcal{F}$ for $f \in \mathcal{F}$ we take $\mathscr{L}_k$ such that
$
\mathscr{L}_k \varphi^* x_{i,j} = \varphi^* \mathcal{L}_k x_{i,j}
$ for the coordinate functions $x_{i,j}$ in $\mathbb{C}^{3g}$.

We have $\mathscr{L}_1 = \partial_{u_1}$, $\mathscr{L}_3 = \partial_{u_3}$, $\mathscr{L}_5 = \partial_{u_5}$, see Section \ref{s2}.
The vector field
$\mathscr{L}_0$ is the Euler vector field and $\mathscr{L}_0 \in \Der \mathcal{F}$ follows from homogeneity properties of hyperelliptic sigma function.
The vector fields $\mathscr{L}_2, \mathscr{L}_4, \mathscr{L}_6, \mathscr{L}_8, \mathscr{L}_{10}$ are determined by the conditions:
\begin{enumerate}
\item $\mathscr{L}_{2k} = L_{2k} + f_{2k,1}(u, \lambda) \partial_{u_1} + f_{2k,3}(u, \lambda) \partial_{u_3} + f_{2k,5}(u, \lambda) \partial_{u_5}$,
\item the fields $\mathscr{L}_k$ satisfy commutation relations obtained by $\varphi^*$ from \eqref{com1}, \eqref{com3}, \eqref{com5}.
\end{enumerate}
The condition (2) determines the coefficients $f_{2k,j}(u, \lambda)$, $j = 1,3,5$, up to constants \mbox{in $u_1$, $u_3$, $u_5$.}
The grading of the variables and the condition $[\mathscr{L}_0, \mathscr{L}_k] = k \mathscr{L}_k$ determines these constants.
\end{proof}

\begin{cor}
The Lie algebra for the generators from Theorem \ref{tl} of the $\mathcal{F}$-module $\Der \mathcal{F}$ in the genus $g=3$ case is
\begin{align*}
[\mathscr{L}_0, \mathscr{L}_k] &= k \mathscr{L}_k, & [\mathscr{L}_1, \mathscr{L}_3] &= 0, & [\mathscr{L}_1, \mathscr{L}_5] &= 0, & [\mathscr{L}_3, \mathscr{L}_5] &= 0,
\end{align*}
\[
  \begin{pmatrix}
 [\mathscr{L}_1, \mathscr{L}_2] \\
 [\mathscr{L}_1, \mathscr{L}_4] \\
 [\mathscr{L}_1, \mathscr{L}_6] \\
 [\mathscr{L}_1, \mathscr{L}_8] \\
 [\mathscr{L}_1, \mathscr{L}_{10}] 
 \end{pmatrix}
= 
 \begin{pmatrix}
 \wp_2 & -1 & 0\\
 \wp_{1;3} & \wp_2 & -1 \\
 \wp_{1;5} & \wp_{1;3} & \wp_2 \\
 0 & \wp_{1;5} & \wp_{1;3} \\
 0 & 0 & \wp_{1;5}
 \end{pmatrix}
 \begin{pmatrix}
  \mathscr{L}_1 \\ \mathscr{L}_3\\ \mathscr{L}_5
 \end{pmatrix},
\]
\[
  \begin{pmatrix}
 [\mathscr{L}_3, \mathscr{L}_2] \\
 [\mathscr{L}_3, \mathscr{L}_4] \\
 [\mathscr{L}_3, \mathscr{L}_6] \\
 [\mathscr{L}_3, \mathscr{L}_8] \\
 [\mathscr{L}_3, \mathscr{L}_{10}] 
 \end{pmatrix}
= 
 \begin{pmatrix}
 \wp_{1;3} - \lambda_4 & 0 & - 3\\
 \wp_{0;3,3} & \wp_{1;3} - \lambda_4 & 0 \\
 \wp_{0;3,5} & \wp_{0;3,3} & \wp_{1;3} - \lambda_4 \\
 0 & \wp_{0;3,5} & \wp_{0;3,3}\\
 0 & 0 & \wp_{0;3,5}
 \end{pmatrix}
 \begin{pmatrix}
  \mathscr{L}_1 \\ \mathscr{L}_3\\ \mathscr{L}_5
 \end{pmatrix} +
 {3 \over 7}
 \begin{pmatrix}
 5 \lambda_4 \\
 4 \lambda_6 \\
 3 \lambda_8 \\
 2 \lambda_{10} \\
 \lambda_{12}
 \end{pmatrix}
 \mathscr{L}_1,
\]
\[
  \begin{pmatrix}
 [\mathscr{L}_5, \mathscr{L}_2] \\
 [\mathscr{L}_5, \mathscr{L}_4] \\
 [\mathscr{L}_5, \mathscr{L}_6] \\
 [\mathscr{L}_5, \mathscr{L}_8] \\
 [\mathscr{L}_5, \mathscr{L}_{10}] 
 \end{pmatrix}
= 
 \begin{pmatrix}
 \wp_{1;5} & 0 & 0\\
 \wp_{0;3,5} & \wp_{1;5} & 0 \\
 \wp_{0;5,5} & \wp_{0;3,5} & \wp_{1;5} \\
 - \lambda_{12} & \wp_{0;5,5} & \wp_{0;3,5}\\
 -2 \lambda_{14} & - \lambda_{12} & \wp_{0;5,5}
 \end{pmatrix}
 \begin{pmatrix}
  \mathscr{L}_1 \\ \mathscr{L}_3\\ \mathscr{L}_5
 \end{pmatrix} +
 {2 \over 7}
 \begin{pmatrix}
 2 \lambda_{4} \\
 3 \lambda_6 \\
 4 \lambda_8 \\
 5 \lambda_{10} \\
 6 \lambda_{12} 
 \end{pmatrix}
 \mathscr{L}_3
 - \begin{pmatrix}
 0 \\
 3 \lambda_4 \\
 2 \lambda_6 \\
 \lambda_8 \\
 0
 \end{pmatrix}
 \mathscr{L}_5,
\]
\[
 \begin{pmatrix}
 [\mathscr{L}_2, \mathscr{L}_4] \\
 [\mathscr{L}_2, \mathscr{L}_6] \\
 [\mathscr{L}_2, \mathscr{L}_8] \\
 [\mathscr{L}_2, \mathscr{L}_{10}] \\
 [\mathscr{L}_4, \mathscr{L}_6] \\
 [\mathscr{L}_4, \mathscr{L}_8] \\
 [\mathscr{L}_4, \mathscr{L}_{10}] \\
 [\mathscr{L}_6, \mathscr{L}_8] \\
 [\mathscr{L}_6, \mathscr{L}_{10}] \\
 [\mathscr{L}_8, \mathscr{L}_{10}] 
 \end{pmatrix}
=
\mathcal{M}
 \begin{pmatrix}
 \mathscr{L}_0 \\
 \mathscr{L}_2 \\
 \mathscr{L}_4 \\
 \mathscr{L}_6 \\
 \mathscr{L}_8 \\
 \mathscr{L}_{10}
 \end{pmatrix}
+ {1 \over 2}
 \begin{pmatrix}
- \wp_{2;3} & \wp_{3} & 0 \\
- \wp_{1;3,3} - \wp_{2;5} & \wp_{2;3} & \wp_{3} \\
- 2 \wp_{1;3,5} & \wp_{2;5} & \wp_{2;3} \\
- \wp_{1;5,5} & 0 & \wp_{2;5} \\
 - \wp_{0;3,3,3} & \wp_{1;3,3} - 2 \wp_{2;5} & 2 \wp_{2;3} \\
 - 2 \wp_{0;3,3,5} & 0 & 2 \wp_{1;3,3} \\
 - \wp_{0;3,5,5} & - \wp_{1;5,5} & 2 \wp_{1;3,5} \\
- 2 \wp_{0;3,5,5} & 2 \wp_{1;5,5} - \wp_{0;3,3,5} & \wp_{0;3,3,3} \\
- \wp_{0;5,5,5} & - \wp_{0;3,5,5} & \wp_{0;3,3,5} + \wp_{1;5,5}\\
0 & - \wp_{0;5,5,5} & \wp_{0;3,5,5} 
 \end{pmatrix}
 \begin{pmatrix}
 \mathscr{L}_1 \\
 \mathscr{L}_3 \\
 \mathscr{L}_5 \\
 \end{pmatrix},
\]
where $\mathcal{M}$ is given by \eqref{M}.
\end{cor}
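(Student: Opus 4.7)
The plan is to deduce each commutation relation for the lifted fields $\mathscr{L}_k$ from the corresponding polynomial Lie algebra relation for the downstairs fields $\mathcal{L}_k$ collected in Section \ref{s9}, by applying $\varphi^*$. Recall from the proof of Theorem \ref{tl} that $\mathscr{L}_k$ is uniquely characterized as an element of $\Der \mathcal{F}$ by its pushforward under $\pi$ (equal to $L_{2k}$ for even $k$ and zero for $k=1,3,5$) together with its action $\mathscr{L}_k(\varphi^* x_{i,j}) = \varphi^* \mathcal{L}_k(x_{i,j})$ on the generators. It therefore suffices to verify each claimed identity on this separating pair of data.

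First I would write down the $\varphi$-dictionary explicitly: by Corollary \ref{c1} and formulas \eqref{y3}, \eqref{y5} we have $\varphi^* x_2 = \wp_2$, $\varphi^* x_3 = \wp_3$, $\varphi^* y_4 = \wp_{1;3}$, $\varphi^* y_5 = \wp_{2;3}$, $\varphi^* z_6 = \wp_{1;5}$, $\varphi^* z_7 = \wp_{2;5}$, $\varphi^* w_6 = \wp_{0;3,3}$, $\varphi^* w_8 = \wp_{0;3,5}$, $\varphi^* w_{10} = \wp_{0;5,5}$. The higher-order polynomials $p_7, p_9, p_{11}$ and $w_9, w_{11}, w_{13}, w_{15}$ were built in Section \ref{s9} by iterated application of $\mathcal{L}_1, \mathcal{L}_3, \mathcal{L}_5$ to the $w$'s, and since $\varphi^* \mathcal{L}_{2k+1} = \mathscr{L}_{2k+1} = \partial_{u_{2k+1}}$ by Section \ref{s2}, they pull back to the higher $\wp$-symbols $\varphi^* p_7 = \wp_{1;3,3}$, $\varphi^* w_9 = \wp_{0;3,3,3}$, and so on through $\varphi^* w_{15} = \wp_{0;5,5,5}$, exactly the entries appearing in the corrective matrix on the right-hand side.

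Now for any pair $(i,j)$ the intertwining identity combined with the Leibniz rule yields
\begin{equation}
[\mathscr{L}_i, \mathscr{L}_j](\varphi^* x_{a,b}) = \varphi^* [\mathcal{L}_i, \mathcal{L}_j](x_{a,b}),
\end{equation}
and substituting the explicit downstairs commutators \eqref{com1}, \eqref{com3}, \eqref{com5} and the big table of Section \ref{s9} then reading off the $\wp$-expressions via the dictionary above, this precisely matches the claimed right-hand side applied to $\varphi^* x_{a,b}$.

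The step requiring most care is the behaviour on the parameter generators $\pi^* \lambda_k$. For commutators in which at least one entry is $\mathscr{L}_1, \mathscr{L}_3$ or $\mathscr{L}_5$, both sides annihilate $\pi^* \lambda_k$: the left side because odd fields are $\pi$-vertical, and the right side because every summand on the right is a coefficient multiple of $\mathscr{L}_1, \mathscr{L}_3$ or $\mathscr{L}_5$. For the even--even commutators $[\mathscr{L}_{2i}, \mathscr{L}_{2j}]$, both sides push forward under $\pi$ to $[L_{2i}, L_{2j}]$; by the Lemma of Section \ref{s4} (formula \eqref{M}) this pushforward equals the $\mathcal{M}$-part of the claimed formula, because the remaining corrective terms only involve the vertical fields $\mathscr{L}_1, \mathscr{L}_3, \mathscr{L}_5$ and project to zero. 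This simultaneous agreement on the $\wp$-generators and the $\lambda$-generators of $\mathcal{F}$ identifies $[\mathscr{L}_i, \mathscr{L}_j]$ with the stated expression and proves the Corollary.
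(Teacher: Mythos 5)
Your argument is exactly the paper's proof, which consists of the single line ``apply $\varphi^*$ to the polynomial Lie algebra in Section~\ref{s9}''; you have simply made explicit the dictionary $\varphi^* x_{i,j}=\wp_{i;j}$, $\varphi^* w_{k,l}=\wp_{0;k,l}$ (and its extension to $p_7,\dots,w_{15}$ via $\mathscr{L}_{2k+1}=\partial_{u_{2k+1}}$), the passage of the intertwining relation to commutators, and the separate check on the $\lambda$-generators. This is correct and is the same route as the paper, just written out in full.
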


\begin{proof}[The proof]
is obtained by applying $\varphi^*$ to the polynomial Lie algebra in Section~\ref{s9}.
\end{proof}


\begin{thebibliography}{99} 

\bibitem{FS}
F.~G.~Frobenius, L.~Stickelberger, 
\emph{\"Uber die Differentiation der elliptischen Functionen nach den Perioden und Invarianten},
J. Reine Angew. Math., 92 (1882), 311--337.

\bibitem{BL}
V.~M.~Buchstaber, D.~V.~Leikin,
\emph{Solution of the Problem of Differentiation of Abelian Functions over Parameters for Families of $(n,s)$-Curves},
Funct. Anal. Appl., 42:4,
2008, 268--278.

\bibitem{B}
V.~M.~Buchstaber,
\emph{Polynomial dynamical systems and Korteweg--de Vries equation},
Proc. Steklov Inst. Math., 294 (2016), 176--200.

\bibitem{BEL}
V.~M.~Buchstaber, V.~Z.~Enolskii, D.~V.~Leikin,
\emph{Kleinian functions, hyperelliptic Jacobians and applications},
Reviews in Mathematics and Math. Physics, 10:2, Gordon and Breach, London, 1997, 3--120.

\bibitem{BPol}
V.~M.~Buchstaber, D.~V.~Leikin,
\emph{Polynomial Lie Algebras},
Funct. Anal. Appl., 36:4 (2002), 267--280.


\bibitem{DN}
B.~A.~Dubrovin, S.~P.~Novikov,
\emph{A periodic problem for the Korteweg-de Vries and Sturm-Liouville equations.
Their connection with algebraic geometry. (Russian)} Dokl. Akad. Nauk SSSR, 219:3, 1974, 531--534.


\bibitem{Baker}
H.~F.~Baker,\, \emph{On the hyperelliptic sigma functions}, Amer. Journ. Math. 20, 1898, 301--384.

\bibitem{BEL-97}
V.~M.~Buchstaber, V.~Z.~Enolskii, D.~V.~Leikin,\, \emph{Hyperelliptic Kleinian functions and applications}, ``Solitons, Geometry and Topology: On the Crossroad'', Adv. Math. Sci., AMS Transl., 179:2, Providence, RI, 1997, 1--34.

\bibitem{BEL-12}
V.~M.~Buchstaber, V.~Z.~Enolskii, D.~V.~Leikin,\, \emph{Multi-Dimensional Sigma-Functions}, arXiv: 1208.0990, 2012, 267 pp.

\bibitem{WW}
E.~T.~Whittaker, G.~N.~Watson, 
\emph{A Course of Modern Analysis},
Reprint of 4th (1927) ed., Vol 2. \emph{Transcendental functions},
Cambridge Univ. Press, Cambridge, 1996.

\bibitem{BM}
V.~M.~Buchstaber, A.~V.~Mikhailov,
\emph{Infinite-Dimensional Lie Algebras Determined by the Space of Symmetric Squares of Hyperelliptic Curves},
Functional Analysis and Its Applications, Vol. 51, No.~1, 2017.

\bibitem{A}
V.~I.~Arnold,
\emph{Singularities of Caustics and Wave Fronts},
Mathematics and its Applications, vol. 62, Kluwer Academic Publisher Group, Dordrecht, 1990.

\end{thebibliography}
\end{document}